\shorttitle{Limit theorems and structural properties of the Cat-and-Mouse Markov chain} 
\newcommand{\eq}[1]{(\ref{eq:#1})}
\newcommand{\lemt}[1]{Lemma~\ref{lem:#1}}
\newcommand{\thr}[1]{Theorem~\ref{thr:#1}}
\newcommand{\fig}[1]{Figure~\ref{fig:#1}}
\newcommand{\app}[1]{Appendix~\ref{app:#1}}
\newcommand{\sectn}[1]{Section~\ref{sect:#1}}
\newcommand{\sect}[1]{\ref{sect:#1}}
\newcommand{\PB}{\mathbb{P}}
\newcommand{\Expect}{\mathbb{E}}
\newcommand{\D}{\mathbf{Var}}
\begin{document}

\title{Limit theorems and structural properties of the Cat-and-Mouse Markov chain and its generalisations} 

\authorone[Novosibirsk State University, Novosibirsk]{Timofei Prasolov} 
\authortwo[Heriot-Watt University, Edinburgh and MCA, Novosibirsk State University and Sobolev Institute of Mathematics, Novosibirsk]{Sergey Foss}
\authorthree[Heriot-Watt University, Edinburgh and MCA, Novosibirsk State University, Novosibirsk]{Seva Shneer}

\addressone{Pirogova 1, Novosibirsk State University, Novosibirsk, 630090\\ \email{prasolov.tv@yandex.ru}\\
Research was financially s supported by the Mathematical Center in Akademgorodok under Agreement No. 075-15-2019-1613 with the Ministry of Science and Higher Education of the Russian Federation} 
\addresstwo{Heriot-Watt University, Edinburgh, EH14 4AS\\ \email{sergueiorfoss25@gmail.com}} 
\addressthree{Heriot-Watt University, Edinburgh, EH14 4AS\\ \email{v.shneer@hw.ac.uk}} 

\begin{abstract}
We revisit the so-called Cat-and-Mouse Markov chain, studied earlier by Litvak and Robert (2012). This is a 2-dimensional Markov chain on the lattice $\mathbb{Z}^2$, where the first component (the cat) is a simple random walk and the second component (the mouse) changes when the components meet. We obtain new results for two generalisations of the model. Firstly, in the 2-dimensional case we consider far more general jump distributions for the components and obtain a scaling limit for the second component. When we let the first component be a simple random walk again, we further generalise the jump distribution of the second component. Secondly, we consider chains of three and more dimensions, where we investigate structural properties of the model and find a limiting law for the last component.   
\end{abstract}

\keywords{cat-and-mouse games; multidimensional Markov chain; compound renewal process; regular variation; weak convergence; randomly stopped sums.} 

\ams{60J10}{60F05} 

\section{Introduction}\label{sect:introduction} 


We analyse the dynamics of a stochastic process with dependent coordinates, commonly referred to as the Cat-and-Mouse (CM) Markov chain (MC), and of its generalisations. Let $\mathcal{S}$ be a directed graph. Let $\{(C_n, M_n)\}_{n=0}^\infty$ denote the CM MC on $\mathcal{S}^2$, defined as follows. 
We assume that $\{C_n\}_{n=0}^\infty$, the location of the cat, is a MC on $\mathcal{S}$ with transition matrix $P = (p(x, y)),x, y \in \mathcal{S})$. The second coordinate, the location of the mouse, $\{M_n\}_{n=0}^\infty$ has the following dynamics:
\begin{itemize}
\item If $M_n \neq C_n$, then $M_{n+1} = M_n$,
\item If $M_n = C_n$, then, conditionally on $M_n$, the random variable $M_{n+1}$ has distribution $(p(M_n, y), y \in \mathcal{S})$ and is independent of $C_{n+1}$.
\end{itemize}
In our model the cat is trying to catch the mouse. The mouse is usually in hiding and not moving but, if the cat hits the same location of the graph, the mouse jumps. The cat does not notice where the mouse jumps to, so it proceeds independently.

CM MC is an example of models called Cat-and-Mouse games. CM games are common in game theory. We refer to Coppersmith \textit{et al.} (1993), where the authors showed that a CM game is at the core of many on-line algorithms and, in particular, may be used in settings considered by Manasse \textit{et al.} (1990) and Borodin \textit{et al.} (1992). Some special cases of CM games on the plane have been studied by Baeza-Yates \textit{et al.} (1993). Two examples of CM games have been discussed in Aldous and Fill (2002) in the context of reversible MCs.

There are many related models in applied probability where time evolution of the process may be represented as a multi-component Markov chain where one of the components has independent dynamics and forms a Markov chain itself (for example Gamarnik (2004), Gamarnik and Squillante (2005), Borst \textit{et al.} (2008), Foss \textit{et al.} (2012)). Typically such dependence is modelled using Markov modulation. In this paper we consider the case where the first component is a random walk. Thus, our model can be viewed as a random-walk-modulated random walk. We consider null-recurrent and transient cases where we find proper scaling for the components.

We are mainly motivated by the results of the paper by Litvak and Robert (2012) where the authors analyse scaling properties of the (non-Markovian) sequence $\{M_n\}_{n=0}^\infty$ for a specific transition matrix $P$ when $\mathcal{S}$ is either $\mathbb{Z}, \mathbb{Z}^2$ or $\mathbb{Z}^+$. Although it deals with a relatively simple case of the CM MC, it clearly illustrates a certain phenomenon related to this type of Markov modulation. We will focus on the case $\mathcal{S} = \mathbb{Z}$. In addition, assume that the transition matrix $P$ satisfies $p(x, x+1) = p(x, x-1) = 1/2$. 
It was proven in Theorem 3 of Litvak and Robert (2012) that the convergence in distribution
\begin{align*}
\left\{\frac{1}{\sqrt[4]{n}}M_{[nt]}, t\geq 0\right\} \Rightarrow \left\{B_1 (L_{B_2}(t)), t\geq 0\right\}, \  \  
\mbox{as} \ \  n\to\infty,  
\end{align*}
holds, where $B_1(t)$  and $B_2(t)$ are independent standard Brownian motions on $\mathbb{R}$ and $L_{B_2}(t)$ is the local time process of $B_2(t)$ at $0$. 
By convergence in distribution of c\`{a}dl\`{a}g
stochastic processes we mean convergence in the ${\mathcal J}_1$-topology
 (see Appendix A for further explanation).

This result looks natural, since the mouse, observed at the meeting times with the cat, is a simple random walk. The time intervals between meeting times are independent and identically distributed. They have the same distribution as the time needed for the cat (also a simple random walk) to get from $1$ to $0$, which has a regularly varying tail with parameter $1/2$ (see, e.g., Spitzer (1964)). Thus, the scaling for the location of the mouse is $\sqrt[4]{n} = (n^{1/2})^{1/2}$.  Local time $L_{B_2}(t)$ can be interpreted as the scaled duration of time the cat and the mouse spend together. 

This led us to question to what extent such a phenomenon holds, for what kind of distributions of jumps, and for which types of Markov modulation. In this paper we show that similar behaviour holds when jump-size distributions of both components have zero mean and finite variance. The behaviour slightly changes when we introduce heavier tails for the jump-size distribution of the mouse. For this case we develop a more general approach based on the work of Jurlewicz \textit{et al.} (2012). In parallel, we introduce additional components while applying an analogue of the aforementioned Markov modulation. Here, through the analysis of dynamical structural properties, we demonstrate a similar phenomenon for additional components.   

More specifically, we provide two generalisations of the CM MC introduced above. The first generalisation relates to the jump distribution of the mouse. Given $C_{n} = M_{n}$, random variable $M_{n+1} - M_n$ has a general distribution which has a finite first moment and belongs to the strict domain of attraction of a stable law with index $\alpha \in [1, 2]$, with a normalising function $\{b(n)\}_{n=1}^\infty$ (note that distributions with a finite second moment belong to the domain of attraction of a normal distribution) and a centering function $n\Expect(M_{n+1} - M_n)$. We find a weak limit of ${\{b^{-1}(\sqrt{n}) M_{[nt + 1]}\}_{t\geq 0}}$, as $n\to\infty$. This model is more challenging than the classical setting because, when the mouse jumps, the value of this jump and the time until the next jump may be dependent. Also, if the jump distribution of the mouse has an infinite second moment, we cannot use classical results for regenerative jump processes such as Theorem 5.1 from Kasahara (1984). Next, we consider the case where both components have general distributions with finite second moments. Here our results take into account the approach developed by Uchiyama (2011a).

In the second generalisation we add more components (we will refer to the objects whose dynamics these components describe as ``agents'') to the system, with keeping the chain ``hierarchy''. For instance, adding one extra agent (we refer to it as the dog), acting on the cat the same way as the cat acts on the mouse, slows down the cat and, therefore, also the mouse. We are interested in the effect of this on the scaling properties of the process. Recursive addition of further agents will slow down the mouse further. For the system with three agents we investigate the dynamical structural properties and find a weak limit of $\{n^{-1/8} M_{[nt]}\}_{t\geq 0}$, as $n\to\infty$. The system regenerates when all the agents are at the same point. Therefore, if we find the tail asymptotics of the time intervals between these events, we can split the process into i.i.d. cycles and use a limit theorem for regenerative processes by Kasahara (1984)).

We also consider systems with an arbitrary finite number of agents where we provide a relatively simple result on weak convergence, for a fixed $t>0$. The path analysis in this case turned out to be difficult and we only have partial results. Namely, we have studied the relation between the numbers of jumps of different agents and obtained the limiting law for the last agent. We have not succeeded in finding the asymptotics for the time intervals between regeneration points, which remains an open question.

We note that our paper is devoted to limit theorems for above-mentioned Markov modulated Markov chains. It is worth noting other interesting questions for such type of models not covered here: stability problems were studied in, e.g., [\ref{ShahShin}]--[\ref{FossShneerThomasWorral}] and large deviations problems in, e.g., [\ref{AsmussenHenriksenKloppelberg}]--[\ref{FossKonstantopoulosZachary}].

The paper is structured as follows. In \sectn{models&results} we define our models and formulate our results. In Sections \sect{CatandMouseModel} and \sect{trajectories} we analyse the trajectories of the CM MC and \textit{Dog-and-Cat-and-Mouse} (DCM) MC respectively. This analysis gives the main idea of the proof of our result on scaling properties of DCM MC (\thr{DogCatMouse}). In \sectn{generalised mouse}, we prove our results on scaling properties of general CM MC (\thr{GeneralMouse} and \thr{general cat and mouse}). We shift the time of our process and use characteristic functions to show that the conditions of Theorem 3.1 from Jurlewicz \textit{et al.} (2012) hold. In \sectn{dog cat mouse}, we prove \thr{DogCatMouse}. We approximate the dynamics of the mouse by considering only the values of the process at times when all agents are at the same point of the integer line and then use Theorem 5.1 from Kasahara (1984) to obtain the result. In the \sectn{linear food chain}, we prove our result on scaling properties for the system with an arbitrary finite number $N$ of agents. We approximate the $N$-th component of the system $X^{(N)}$ (that describes behaviour of the $N$'th agent) by the component $X^{(N-1)}$, slowed down by an independent renewal process.

The Appendix includes definitions and proofs of supplementary results. In \app{m_1-topology}, we define weak convergence of stochastic processes.  \app{NewProp3} clarifies the asymptotic closeness of two scaled processes related to Theorem 1. Finally, in \app{RandStopSumAsymp} we provide auxiliary results on randomly stopped sums with positive summands having a regularly varying tail distribution and infinite mean. 

Throughout the paper we use the following conventions and notation. For two ultimately positive functions $f(t)$ and $g(t)$ we write $f(t) \sim g(t)$  if $\lim_{t\to\infty} f(t)/g(t) = 1$. For any event $A$, its \textit{indicator function} $I[A]$ is a random variable that takes value $1$ if the event occurs, and value $0$, otherwise. 
We use symbol ${\Rightarrow}$ for the weak convergence of distributions of random variables or vectors, 
and $\overset{\mathcal{D}}{\Rightarrow}$ for convergence of trajectories of random processes in ${\mathcal J}_1$-topology (see Appendix A). Next, $X\overset{d}{=}Y$
means that random variables $X$ and $Y$ are identically distributed.  Finally we use the following abbreviations: CM -- Cat-and-Mouse, DCM -- Dog-and-Cat-and-Mouse, MC -- Markov chain, i.i.d. -- independent and identically distributed, r.v. -- random variable, a.s. -- almost surely, w.p. -- with probability.

\section{Models and results}\label{sect:models&results}
In this section we recall the CM MC on the integers and introduce several of its generalisations.
\subsection{``Standard''\ Cat-and-Mouse Markov chain on $\mathbb{Z}$ \ $(C\to M)$}
Let $\xi = \pm 1$ w.p. $1/2$. Let $\{\xi^{(1)}_n\}_{n=1}^\infty$ and $\{\xi^{(2)}_n\}_{n=1}^\infty$ be two mutually independent sequences of independent copies of $\xi$. Given $C_0 = M_0 = 0$, we define the dynamics of CM MC $(C_n, M_n)$ as follows:
$$C_{n} = C_{n-1} + \xi^{(1)}_{n},$$
$$M_n = M_{n-1} + \begin{cases}
0, & \text{if $C_{n-1} \neq M_{n-1},$}\\
\xi^{(2)}_n, & \text{if $C_{n-1} = M_{n-1},$}
\end{cases}$$
for $n \geq 1$.

Let $D[[0,\infty), \mathbb{R}]$ denote the space of all right-continuous functions on $[0, \infty)$ having left limits (RCLL, or c\`{a}dl\`{a}g functions).

Let $M(nt) = M_{[nt]}$, $t\geq0$, be a continuous-time stochastic process taking values $M_k$, $k\geq 0$, for $t\in[k/n, (k+1)/n)$. Clearly, it is piecewise constant and its trajectories belong to $D[[0,\infty), \mathbb{R}]$.

Litvak and Robert (2012) have proved convergence of trajectories 
\begin{equation*}\label{eq:LitvakRobert}
\left\{\frac{1}{\sqrt[4]{n}}M(nt), t\geq 0\right\} \overset{\mathcal{D}}{\Rightarrow} \left\{B_1 (L_{B_2}(t)), t\geq 0\right\}, \ \text{as $n\to\infty$}, 
\end{equation*}
 where $B_1(t)$  and $B_2(t)$ are independent standard Brownian motions on $\mathbb{R}$ and $L_{B_2}(t)$ is the local time process of $B_2(t)$ at $0$. 

\subsection{Cat-and-Mouse model with a general jump distribution of the mouse \ $(C \to M)$}
In this Subsection we introduce our results for CM MC with more general distributions of random variables $\xi^{(1)}_n$ and $\xi^{(2)}_n$. We start with the same distribution of $\xi^{(1)}_n$ and generalise distribution of $\xi^{(2)}_n$. Thus, the cat is a simple random walk and  the mouse is a general random walk. We then proceed to generalise the distribution of $\xi^{(1)}_n$ as well.

\textbf{2.2.1} We continue to assume that the dynamics of the cat is described by a simple random walk on $\mathbb{Z}$.  Let $\xi = \pm 1$ w.p. $1/2$. Let $C_0 = 0, \ C_{n} = C_{n-1} + \xi^{(1)}_n$, where $\xi, \xi^{(1)}_1, \xi^{(1)}_2, \ldots$ are independent and identically distributed (i.i.d) r.v.'s.

Let $M_0 = 0, \ M_n = M_{n-1} + \xi^{(2)}_n I[C_{n-1} = M_{n-1}]$ where $\{\xi^{(2)}_n\}_{n=1}^\infty$ are i.i.d r.v.'s independent of $\{\xi^{(1)}_n\}_{n=1}^\infty$. Assume  that 
\begin{equation}\label{eq:finite mean}
\mu = \Expect \xi^{(2)}_1 \ \text{is finite}
\end{equation}
and that the distribution of $\xi_1^{(2)}$ belongs to the domain of attraction of a strictly stable distribution, i.e. 
there exist a function $b(c) >0$, $c\geq0$, and a r.v. $A^{(2)}$ having a strictly stable distribution with index $\alpha \in [1,2]$ such that the weak convergence 
\begin{equation}\label{eq:stable law A}
\frac{\sum_{k=1}^{n} (\xi^{(2)}_k - \mu)}{b(n)} \Rightarrow A^{(2)}, \ \text{as $n\to\infty$}
\end{equation}
holds. Define
\begin{align*}
\tau(0) = 0 \ \text{and} \ \tau(n) = \inf\{m > \tau(n-1): \ C_m = M_m\}, \ \text{for $n\ge 1$}.
\end{align*}
Given \eq{finite mean}, we show that the tail-distribution of $\tau(1)$ is regularly varying with index $1/2$.  It follows then that there exists a r.v. $D^{(2)}$ having a strictly stable distribution with index $1/2$ such that
\begin{equation}\label{eq:T^2}
\frac{\tau(n)}{n^2} \Rightarrow D^{(2)}, \ \text{as $n\to\infty$.}
\end{equation}

In the proof of \thr{GeneralMouse} we show that, in fact, there is a weak convergence of two-dimensional random vectors 
\begin{align*}
\left(\frac{\sum_{k=1}^{n} (\xi^{(2)}_k - \mu)}{b(n)}, \  \frac{\tau(n)}{n^2}\right) \ \Rightarrow (A^{(2)}, D^{(2)}), \ \text{as $n\to\infty$,}
\end{align*}
where the r.v.'s on the right-hand side are independent. Further, let $\{(A^{(2)}(t), D^{(2)}(t))\}_{t\geq 0}$ denote a stochastic process with independent increments such that $(A^{(2)}(1), D^{(2)}(1))$ has the same distribution as $(A^{(2)}, D^{(2)})$, or, equivalently, the \textit{L\'evy process} generated by $(A^{(2)}, D^{(2)})$. Let $E^{(2)}(s) = \inf\{t\geq0 : \ D^{(2)}(t)> s\}$, which is a multiple
of $L_{B_2}(s)$, the local time at zero of a standard Brownian motion.  Thus, 
the following result is a natural extension of the result by Litvak and Robert (2012), see Remark below.

\begin{theorem}\label{thr:GeneralMouse}
Assume that \eq{finite mean} and \eq{stable law A} hold. Then
\begin{itemize}
\item if $\mu = 0$, we have \begin{equation}\label{Thr_general_mouse_without_drift}\
\left\lbrace \frac{M(nt + 1)}{b(\sqrt{n})}, \ t\geq 0\right\rbrace \overset{\mathcal{D}}{\Rightarrow} \{A^{(2)}(E^{(2)}(t)), t\geq 0\}, \ \text{as $n\to\infty$,}
\end{equation}
\item if $\mu \neq 0$, we have \begin{equation}\label{Thr_general_mouse_with_drift}\
\left\lbrace \frac{M(nt + 1)}{\sqrt{n}}, \ t\geq 0\right\rbrace \overset{\mathcal{D}}{\Rightarrow} \{\mu E^{(2)}(t), t\geq 0\}, \ \text{as $n\to\infty$.}
\end{equation}
\end{itemize} 
\end{theorem}

\textit{Remark} 
If r.v.'s $\xi_n^{(2)}$ are bounded, then the function $b(n)$ is proportional to $\sqrt{n}$, and it is easy to show that
the processes $\frac{M(nt + 1)}{\sqrt[4]{n}}$  and $\frac{M(nt)}{\sqrt[4]{n}}$ are equivalent
(see Appendix B for details). Then the  results of Theorem 1 may be reformulated for the scaled process $M(nt)$ in place of $M(nt + 1)$ .

\textbf{2.2.2} Assume now that both $\xi^{(1)}_1$ and $\xi^{(2)}_1$ have general distributions on the integer lattice. The main difference for the mouse is that we need now to assume that the second moment of $\xi^{(2)}_1$ is finite. Our main result in this setting is that replacing the simple random walk with a general random walk does not change the scaling if we assume aperiodicity and finite second moments for the increments. By aperiodicity (strong aperiodicity in the sense of Spitzer (1964)) we mean here that
\begin{align*}
G.C.D. \{ k: \ {\mathbb P} (\xi_1^{(1)}=k)>0\} =1,
\end{align*} 
where ``G.C.D.'' stands for the ``greatest common divisor''.

\begin{theorem}\label{thr:general cat and mouse}
Assume that $\Expect \xi^{(1)} =0$, $0 <\D \xi^{(1)}_1 < \infty$ and $\xi^{(1)}_1$ has an aperiodic distribution. Assume $0< \D \xi^{(2)}_1 < \infty$ and, therefore, \eq{stable law A} holds with $b(n) = \sqrt{n\D \xi^{(2)}_1}$ and a standard normal r.v. $A^{(2)}$. Then the statements $(\ref{Thr_general_mouse_without_drift})$ and $(\ref{Thr_general_mouse_with_drift})$ of Theorem \textbf{\ref{thr:GeneralMouse}}  continue to hold, with $b(\sqrt{n}) = n^{1/4}\sqrt{\D \xi^{(2)}_1}$ in $(\ref{Thr_general_mouse_without_drift})$.
\end{theorem}

\subsection{Dog-and-Cat-and-Mouse model \ ($D \to C \to M$)}
In this Subsection we present a generalisation of the CM MC to the case of $3$ dimensions.
Let $\xi = \pm 1$ w.p. $1/2$. Let $\{\xi^{(1)}_n\}_{n=1}^\infty$, $\{\xi^{(2)}_n\}_{n=1}^\infty$ and $\{\xi^{(3)}_n\}_{n=1}^\infty$ be mutually independent sequences of independent copies of $\xi$. Given $D_0 = C_0 = M_0 = 0$, we can define the dynamics of DCM MC $\{(D_n, C_n, M_n)_n\}_{n=1}^\infty$ as follows: for $n\ge 1$, 
$$D_{n} = D_{n-1} + \xi^{(1)}_{n},$$
$$C_n = C_{n-1} + \begin{cases}
0, & \text{if $D_{n-1} \neq C_{n-1},$}\\
\xi^{(2)}_n, & \text{if $D_{n-1} = C_{n-1},$}
\end{cases}$$
$$M_n = M_{n-1} + \begin{cases}
0, & \text{if $C_{n-1} \neq M_{n-1},$}\\
\xi^{(3)}_n, & \text{if $C_{n-1} = M_{n-1}.$}
\end{cases}$$

Let $T^{(3)}(0) = 0$ and $T^{(3)}(k) = \min\{n >T^{(3)}(k-1): \ D_n = C_n = M_n\}$, for $k\geq 1$. We show that the tail-distribution of $T^{(3)}(1)$ is regularly varying with index $1/4$. Further, we show that there exists a positive r.v. $D^{(3)}$ with a stable distribution and Laplace transform $\exp(-s^{1/4})$ such that
\begin{equation}\label{eq:T^3}
\frac{T^{(3)}(k)}{2^7 k^4} \Rightarrow D^{(3)}, \ \text{as $k\to\infty$.}
\end{equation}

Let $\{D^{(3)}(t)\}_{t\geq 0}$ be a L\'evy process generated by $D^{(3)}$ and $E^{(3)}(s) = \inf\{t\geq0 : \ D^{(3)}(t)> s\}$.

\begin{theorem}\label{thr:DogCatMouse}
We have $\Expect  M(T^{(3)}(1)) = 0$, $\sigma^2 =  \D M(T^{(3)}(1)) = 2$, and
\begin{align*}
\left\lbrace \frac{M(nt)}{2^{-7/8} n^{1/8}\sigma}, t\geq 0\right\rbrace \overset{\mathcal{D}}{\Rightarrow} \{B(E^{(3)}(t)), t\geq 0\},\ \text{as $n\to\infty$, }
\end{align*}
where $B(t)$ is a standard Brownian motion, independent of $E^{(3)}(t)$.
\end{theorem}

\subsection{Linear hierarchical chains $(X^{(1)} \to X^{(2)} \to \ldots \to X^{(N)})$ of length $N$}
In this Subsection we consider a generalisation of the CM MC to the case of $N$ dimensions. Due to the complexity of sample paths for $N>3$, it does not seem to be possible to prove an analogue of \eq{T^2} and \eq{T^3}. For this setting, we prove the convergence for every fixed $t>0$.

Let $\xi = \pm 1$ w.p. $1/2$. Let $\{\{\xi^{(j)}_n\}_{n=1}^\infty\}_{j=1}^N$ be mutually independent sequences of independent copies of $\xi$. Assume $X^{(1)}_0 = \ldots = X^{(N)}_0 = 0$. Then MC $(X^{(1)}_n, \ldots, X^{(N)}_n)$ is defined as follows:
$$X^{(1)}_{n} = X^{(1)}_{n-1} + \xi^{(1)}_{n},$$
$$X^{(j)}_n = X^{(j)}_{n-1} + \begin{cases}
0, & \text{if $X^{(j-1)}_{n-1} \neq X^{(j)}_{n-1},$}\\
\xi^{(j)}_n, & \text{if $X^{(j-1)}_{n-1} = X^{(j)}_{n-1},$}
\end{cases}$$
for $j \in \{2, \ldots, N\}$ and for $n \geq 1$.

For the result below we need the following distribution. Let $G_\alpha$ be the distribution function of a one-sided strictly stable law satisfying the condition $x^\alpha (1- G_\alpha(x)) \to (2-\alpha) / \alpha$, as $x\to\infty$.
\begin{theorem}\label{thr:LinearFoodChain}
Let $\{\zeta_i\}_{i=1}^\infty$ be i.i.d. r.v.'s satisfying $\PB\{\zeta_i \geq y\} = G_{1/2}(9/y^2)$. Let $\psi$ be n r.v. with a standard normal distribution and independent of $\{\zeta_i\}_{i=1}^\infty$. Then, for any fixed $t>0$, we have
$$\frac{X^{(N)}_{[nt]}}{n^{1/2^N}} \Rightarrow t^{1/2^N} \psi \prod_{i=1}^{N} \sqrt{\sqrt{\frac{\pi}{2}}\zeta_i}, \ \text{as $n\to\infty$.}$$
\end{theorem}

\section{Trajectories of the ``standard'' Cat-and-Mouse model}\label{sect:CatandMouseModel}

In order to prove Theorems 1-4, we first revisit the ``standard'' CM model and highlight a number of properties that are of use in the analysis of more general CM and DCM models discussed here.

We assume that $C_0 = M_0 = 0$. Let $V_n = |C_n - M_n|$, for $n\geq 0$. We can write $M_{n+1} = M_{n} + \xi^{(2)}_{n+1}I[V_{n} = 0]$, for $n\geq 1$. Then $V_{n+1} = |C_{n+1}-M_{n+1}| = |C_n - M_n + \xi^{(1)}_{n+1} - \xi^{(2)}_{n+1}I[V_{n} = 0]|$. We can further observe that
\begin{equation*}
V_{n+1} =  \begin{cases}
|\xi^{(1)}_{n+1} - \xi^{(2)}_{n+1}|\overset{d}{=} 1 + \xi^{(1)}_{n+1}, \ \text{if $V_n = 0$,} \\ 
|C_n - M_n + \xi^{(1)}_{n+1}| \overset{d}{=} V_n + \xi^{(1)}_{n+1}, \ \text{if $V_n \neq 0$}
\end{cases}
\end{equation*}
Thus, $V_n$ forms a MC. Let $p_i(j) = \PB\{V_{n+1}=j | V_n = i\}$, for $i,j\geq 0$. Note that $p_0(j) = p_1(j)$ for any $j$. 

Let 
\begin{align*}
U^{(2)}(0) = 0\ \text{and} \ U^{(2)}(k) = \min\{n> U^{(2)}(k-1):\ V_n \in \{0, 1\}\}.
\end{align*}
Since $p_0(j) = p_1(j)$ for any $j$, we have that r.v.'s $\{U^{(2)}(k) - U^{(2)}(k-1)\}_{k=1}^\infty$ are i.i.d. and r.v. $(U^{(2)}(k) - U^{(2)}(k-1))$ does not depend on $V_{U^{(2)}(k-1)}$, for $k\ge 1$. From the Markov property we have
\begin{align*}
V_{U^{(2)}(k) + 1} \overset{d}{=} 1 + \xi^{(1)}_{1} = \begin{cases}
0, & \text{w.p. $\frac{1}{2}$,}\\
2, & \text{w.p. $\frac{1}{2}$.}
\end{cases}
\end{align*}

Thus, after each time-instant $U^{(2)}(k)$ the cat and the mouse jump with equal probabilities either to the same point or to two different points distant by $2$. In the latter case, $V_{U^{(2)}(k+1)} = 1$, since the cat's jumps are $1$ or $-1$. For the cat, let $\tau^{(1)}_m = \min \{n: \sum_{k=1}^n \xi^{(1)}_k = m\}$ denote the hitting time of the state $m$. Then
\begin{align*}
U^{(2)}(1) \overset{d}{=} 1 + \begin{cases}
0, & \text{w.p. $\frac{1}{2}$,}\\
\tau^{(1)}_1, & \text{w.p. $\frac{1}{2}$.}
\end{cases}
\end{align*} 

The tail asymptotics for $\tau^{(1)}_1$ are known: $\PB\{\tau^{(1)}_1>n\} \sim \sqrt{2/(\pi n)}$, as $n\to \infty$ (see, e.g., Section III.2 in Feller (1971a) for related result). Since $\tau^{(1)}_1$ has a distribution with a regularly varying tail, for any $m=2, 3, \ldots$ we have
\begin{align*}
\PB\{\tau^{(1)}_m > n\} \sim m\PB\{\tau^{(1)}_1 > n\} \sim \sqrt{2m^2/(\pi n)}, \ \text{as $n\to\infty$.}
\end{align*}

\section{Trajectories in the Dog-and-Cat-and-Mouse model}\label{sect:trajectories}

In this Section we study structural properties of the DCM MC on $\mathbb{Z}$. We describe the main idea of the analysis which may be of independent interest as, we believe, it may be applied to other multi-component MCs.

As before, let $\{T^{(3)}(n)\}_{n=0}^\infty$ be the meeting time-instants, when all three agents meet at a certain point of $\mathbb{Z}$, and let $\{J_k^{(3)}\}_{k=1}^\infty = \{T^{(3)}(k) - T^{(3)}(k-1)\}_{k=1}^\infty$ be the times between such events. Let $M_{T^{(3)}(n)}$, $n=0, 1, \ldots$, be the locations of the mouse (and, therefore, the common locations of the agents) at the embedded epochs $T^{(3)}(n)$ and $\{Y_k^{(3)}\}_{k=1}^\infty = \{M_{T^{(3)}(k)} - M_{T^{(3)}(k-1)}\}_{k=1}^\infty$ the corresponding jump sizes between the embedded epochs. Due to time homogeneity, random vectors $\{(Y_k^{(3)}, J_k^{(3)})\}$ are i.i.d..

Let $N(t) = \max\{n: \ T^{(3)}(n) = \sum_{k=1}^n J_k^{(3)} \leq t\}$, for $t\geq0$. Let $S_0 = 0$ and $S_n = \sum_{k=1}^n Y_k^{(3)}$. We show that the statement of \thr{DogCatMouse} holds if we replace  $M_n$ with a continuous-time process
\begin{equation*}\label{M_tilde}
\widetilde{M}(t) = S_{N(t)} = \sum_{k=1}^{N(t)} Y_k, \ \text{for $t\geq 0$}.
\end{equation*} 

The process $\widetilde{M}(t)$ is a so-called \textit{coupled continuous-time random walk} (see Becker-Kern \textit{et al.} (2004)), and we use Theorem 5.1 from Kasahara (1984) to obtain its scaling properties.

\subsection{Distribution of r.v. $J^{(3)}_1$}\label{sect:Section Distribution of r.v. J}

We assume that $D_0 = C_0 = M_0 = 0$. Let $V_n = (V_{n1}, V_{n2}) = (|D_n - C_n|, |C_n - M_n|)$. Then the following recursion holds: 
$$(D_{n+1}, C_{n+1}, M_{n+1}) = (D_n + \xi^{(1)}_{n+1}, C_n + \xi^{(2)}_{n+1} I[V_{n1} = 0],  M_n + \xi^{(3)}_{n+1} I[V_{n2} = 0]).$$
Note further that
\begin{equation}\label{eq:00022022}
V_{n+1} \overset{d}{=} \begin{cases}
(1 + \xi^{(1)}_{n+1}, 1+\xi^{(2)}_{n+1}), \ \text{if $V_{n1} = V_{n2} = 0$,} \\ 
(1 + \xi^{(1)}_{n+1}, V_{n2} + \xi^{(2)}_{n+1}), \ \text{if $V_{n1} = 0$ and $V_{n2} \neq 0$,}\\
(V_{n1} + \xi^{(1)}_{n+1}, 1), \ \text{if $V_{n1} \neq 0$ and $V_{n2} = 0$,}\\
(V_{n1} + \xi^{(1)}_{n+1}, V_{n2}), \ \text{if $V_{n1} \neq 0$ and $V_{n2} \neq 0$.}
\end{cases}
\end{equation}

Thus, $\{V_n\}_{n=0}^\infty$ is a MC. Let $p_{ij}(k,l) = \PB\{V_{n+1}=(k, l) | V_n = (i, j)\}$, for $i,j, k,l\geq 0$. Note that $p_{00}(k,l) = p_{01}(k,l)$ for any $k,l \ge 0$.

Let 
\begin{align*}
U^{(3)}(0) = 0\ \text{and} \ U^{(3)}(k) = \min\{n> U^{(3)}(k-1):\ V_n \in  \{(0,0), (0,1)\}\}.
\end{align*}
Since $p_{00}(m,l) = p_{01}(m,l)$ for any $m,l$, we have that r.v.'s
 $\{U^{(3)}(k) - U^{(3)}(k-1)\}_{k=1}^\infty$ are i.i.d. and r.v. $(U^{(3)}(k) - U^{(3)}(k-1))$ does not depend on $V_{U^{(3)}(k-1)}$, for $k\ge 1$.

In other words, each time $n=U^{(3)}(k)$ the DCM process visits either a state on the ``main diagonal'' $D_n=C_n=M_n$ or an  auxiliary state $D_n = C_n = M_n \pm 1$. To find the tail asymptotics
for $T^{(3)}_1$,  we find tail asymptotics for $U^{(3)}(1)$ and then use the natural link between time-instants $T^{(3)}(1)$ and $\{U^{(3)}(k)\}_{k=1}^\infty$.

\begin{lemma}\label{lem:DCM trajectory}
Let $V_0 \in \{(0,0), (0,1)\}$.  Then we have
$$\PB\{U^{(3)}(1) > n\} \sim \frac{1}{2^{1/4}\Gamma(3/4) n^{1/4}}, \ \text{as $n\to\infty$.}$$ Further, $U^{(3)}(1) = 1$ iff $V_{U^{(3)}(1)} = (0,0)$.
\end{lemma}
\begin{proof}
Let $V_0 = (0,0)$. It is apparent from the first line of equation \eq{00022022}  that
\begin{align*}
\PB\{V_1 = (0,0)\} = \PB\{V_1 = (2,0)\} = \PB\{V_1 = (0,2)\} = \PB\{V_1 = (2,2)\} = \frac{1}{4}.
\end{align*}
Since $p_{00}(k,l) = p_{01}(k,l)$,  r.v. $V_1$ has the same distribution given $V_0 = (0, 1)$.

\begin{figure}[!htp]
  \begin{subfigure}[b]{0.23\textwidth}
    \includegraphics[width=\textwidth]{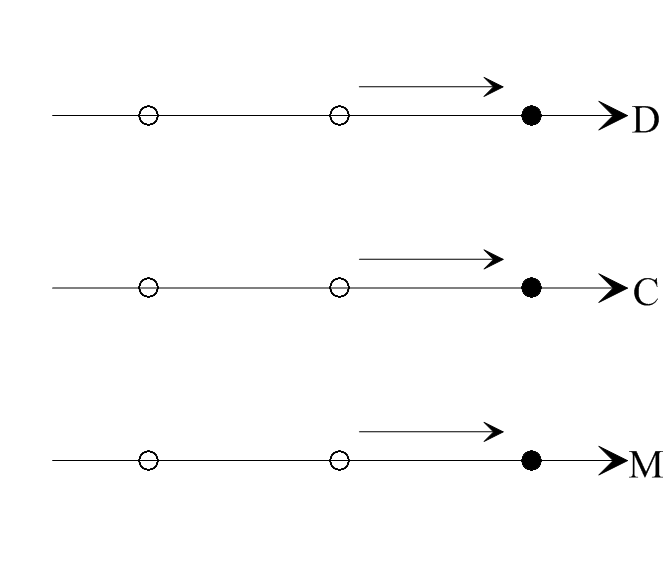}
    \caption{$V_1 = (0,0)$.}
    \label{fig:alpha0000}
  \end{subfigure}
  \hfill
  \begin{subfigure}[b]{0.23\textwidth}
    \includegraphics[width=\textwidth]{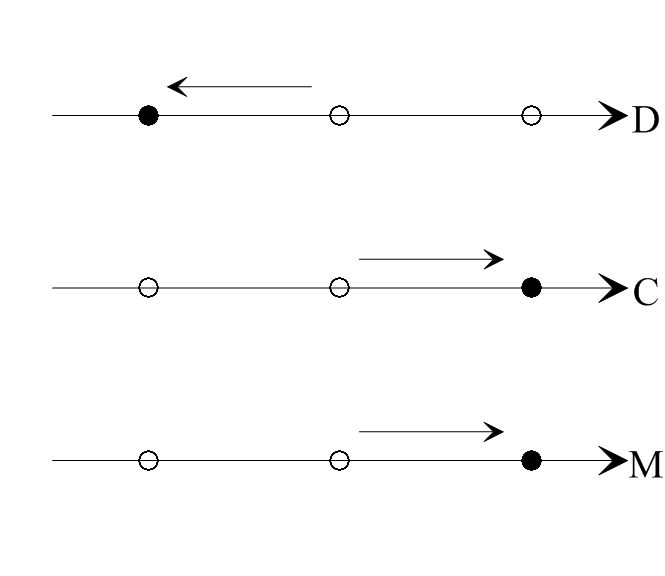}
    \caption{$V_1 = (2,0)$.}
    \label{fig:alpha0020}
  \end{subfigure}
  \hfill
  \begin{subfigure}[b]{0.23\textwidth}
    \includegraphics[width=\textwidth]{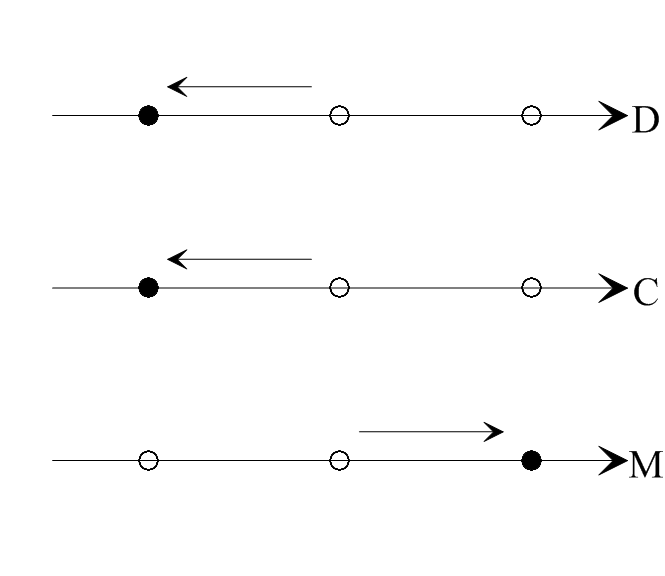}
    \caption{$V_1 = (0,2)$.}
    \label{fig:alpha0002}
  \end{subfigure}
  \hfill
  \begin{subfigure}[b]{0.23\textwidth}
    \includegraphics[width=\textwidth]{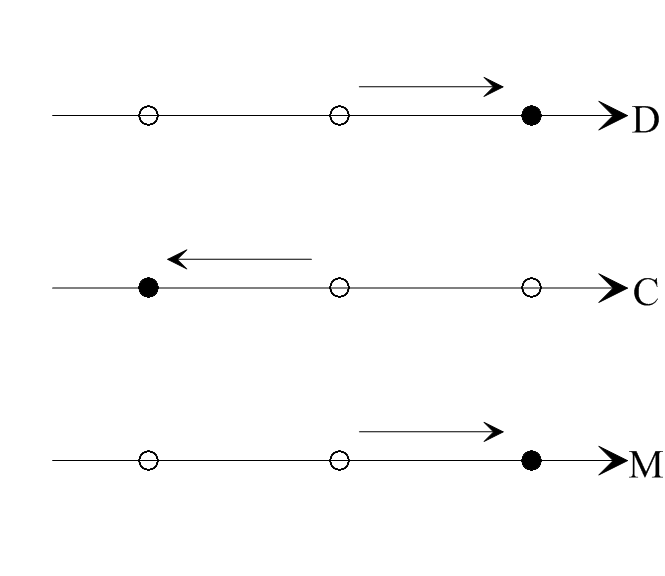}
    \caption{$V_1 = (2,2)$.}
    \label{fig:alpha0022}
  \end{subfigure}
  \caption{The positioning after the first jump.}
\end{figure}

Let $V_1 = (0, 2)$ (\fig{alpha0002}). From the second and the fourth lines of equation \eq{00022022} we know that $|V_{(k+1)2} - V_{k2}| \in \{0, 1\}$, for $k\ge 1$, given $V_{k2} \neq 0$. Therefore $V_{k2}$ arrives at $1$ before hitting $0$ and $V_{U^{(3)}(1)} = (0, 1)$. Let $\tau, \tau_1, \tau_2, \ldots$ be independent copies of $\tau^{(1)}_1$. Then $U^{(3)}(1)$ has the same distribution as $\sum_{k=1}^\tau \tau_k$ and we have that 
$$\PB\{\sum_{k=1}^\tau \tau_k > n\} \sim n^{-1/4} \frac{\Gamma^{1/2}(1/2) \Gamma(1/2)}{\Gamma(3/4)} \sqrt{\sqrt{\frac{2}{\pi}}} \sqrt{\frac{2}{\pi}} = \frac{2^{3/4}}{\Gamma(3/4) n^{1/4}},$$ as $n\to\infty$ (see Appendix C).

Let $V_1 = (2,2)$ (\fig{alpha0022}). From the fourth line of equation \eq{00022022}, $V_{k2}$ remains at $2$ (the cat and the mouse do not move) until $V_{k1}$ reaches $0$. This happens after a time which has the same distribution as $\tau^{(1)}_2 = \min\{n > 0: \ \sum_{k=1}^n \xi^{(1)}_k = 2\}$. Thus, we travel from $(2,2)$ to $(0, 2)$ while never hitting $(0,0)$. We also know that the tail asymptotics of the travel time are $\PB\{\tau^{(1)}_2 > n\} \sim \sqrt{8/\pi n}$, as $n\to\infty$. Therefore, we travel from $(2,2)$ to $(0, 2)$ much faster than from $(0,2)$ to $(0, 1)$ and, given $V_1 = (2,2)$, we again have $\PB\{U^{(3)}(1) > n\} \sim \PB\{\sum_{k=1}^\tau \tau_k > n\}$, as $n\to\infty$.

Finally, let $V_1 = (2, 0)$ (\fig{alpha0020}). From the third line of equation \eq{00022022} we have $V_{2} \overset{d}{=} (2+ \xi^{(1)}_{2}, 1)$ and $V_{U^{(3)}(1)} = (0, 1)$, where, given $V_1 = (2, 0)$,  $\PB\{U^{(3)}(1)>n\} \sim \sqrt{8/\pi n}$, as $n\to\infty$.

Thus,
\begin{align*}
\PB\{U^{(3)}(1) > n\} \sim \frac{1}{2}\PB\{\sum_{k=1}^\tau \tau_k > n\} \sim \frac{1}{2^{1/4}\Gamma(3/4) n^{1/4}}, \ \text{as $n\to\infty$.}
\end{align*}
\end{proof}

We note the following relation between time-instants $T^{(3)}(1)$ and $\{U^{(3)}(k)\}_{k=1}^\infty$. Each time $n$ we are at the auxiliary state (when $V_n = (0, 1)$) we have a probability $1/4$ to jump into the state $D_{n+1} = C_{n+1} = M_{n+1}$ independently of anything else.  Using \lemt{DCM trajectory} and the results of Section XIII.6 from Feller (1971b) we get the following result.
\begin{proposition}\label{tail of J}
Let 
\begin{align*}
\nu = \inf\{k \ge 1: \ U^{(3)}(k) - U^{(3)}(k-1) = 1\}.
\end{align*} 
Then $\nu$ has a geometric distribution with parameter $1/4$, $T^{(3)}\equiv J^{(3)}=U^{(3)}(\nu)$ a.s.  and 
\begin{align*}
\PB\{J^{(3)}_1 > n\} = \PB\{U^{(3)}(\nu) > n\} \sim 4\PB\{U^{(3)}(1) > n\}, \ 
\text{as $n\to\infty$.}
\end{align*}
Therefore, there exists a positive r.v. $D^{(3)}$ having  a stable distribution with Laplace transform $\exp(-s^{1/4})$ such that
\begin{align*}
\frac{T^{(3)}(n)}{2^7 n^4} = \frac{\sum_{k=1}^n J^{(3)}_k}{2^7 n^4} \Rightarrow D^{(3)}, \ \text{as $n\to\infty$}.
\end{align*}
\end{proposition}

\subsection{Distribution of r.v. $Y^{(3)}_1$} 
In the previous Subsection we analysed the time our process spends between auxiliary states. In this Subsection we analyse  the total displacement of the mouse by time $T_1^{(3)}$. Note that the mouse may have either zero or one or two jumps between consecutive visits to auxiliary states and, therefore, the total number of jumps of the mouse by time
$T_1^{(3)}$, say $\varkappa$, admits the following simple upper bound:
\begin{align*}
\varkappa \le 2\nu \quad \mbox{a.s.}
\end{align*}

It follows that the sequence $Z_k = M_{U^{(3)}(k)} - C_{U^{(3)}(k)}$, $k=0,1,\ldots$ forms a time-homogeneous Markov chain and $Z_k \in \{-1, 0, 1\}$ a.s. Further, $Z_0 = Z_\nu = 0$ and, for any $k \in \{1, \nu - 1\}$, we have $Z_k = \pm 1$. 

Let 
\begin{align*}
\gamma^{(3)}_k = M_{U^{(3)}(k)} - M_{U^{(3)}(k-1)} \ \text{for $k\geq 1$.}
\end{align*}
Our analysis of trajectories of DCM MC in \sectn{Section Distribution of r.v. J} shows that $\gamma^{(3)}_k$ are conditionally independent given values of  the auxiliary Markov chain $\{Z_k\}_{k = 0}^\infty$. Clearly, 
$$Y^{(3)}_1 = M_{T^{(3)}(1)} = \sum_{k=1}^\nu \gamma^{(3)}_k.$$
We now find  the first and second moments of $Y_1^{(3)}$ and show that it has a light-tailed distribution.

Since $D_0 = C_0 = M_0 = 0$, we get $\nu=1$ if and only if $Z_1 = 0$ and $\gamma^{(3)}_1 = \pm 1$. Additionally, we have
\begin{align*}
\PB\{\gamma^{(3)}_1 = \pm 1,  \ Z_1 = 0 \ | \ Z_0 = 0\} = \PB\{D_1 = C_1 = M_1 = \pm 1\} = \frac{1}{8}.
\end{align*}
Another case of exactly one jump is when the cat and the mouse jump in different directions. Here we have
\begin{align*}
\PB\{\gamma^{(3)}_1 = \pm 1,  \ Z_1 = \pm 1 \ | \ Z_0 = 0\} = \PB\{C_1 = \mp 1, M_1 = \pm 1\} = \frac{1}{4}.
\end{align*}
Further, the mouse can have two jumps in the same direction w.p.
\begin{align*}
\PB\{\gamma^{(3)}_1 = \pm 2,  \ Z_1 = \pm 1 \ | \ Z_0 = 0\} = \PB\{D_1 = \mp 1, C_1 = M_1 = \pm 1, M_2 = \pm 2\} = \frac{1}{16},
\end{align*}
or two jumps in the opposite directions w.p.
\begin{align*}
\PB\{\gamma^{(3)}_1 = 0,  \ Z_1 = \pm 1 \ | \ Z_0 = 0\} = \PB\{D_1 = \pm 1, C_1 = M_1 = \mp 1, M_2 = 0\} = \frac{1}{16},
\end{align*}
Thus, given $Z_0 = 0$ we have
\begin{align*}
\PB\{Z_1 = 0\} = \frac{1}{4} \ \text{and} \ \PB\{Z_1 = \pm 1\} = \frac{3}{8},
\end{align*}
\begin{align}\label{eq:Z_0 = 0}
\PB\{\gamma^{(3)}_1 = 0\} = \frac{1}{8}, \ \PB\{\gamma^{(3)}_1 = \pm 1\} = \frac{3}{8} \ \text{and} \ \PB\{\gamma^{(3)}_1 = \pm 2\} = \frac{1}{16}.
\end{align}

From the above, we get that $\Expect \gamma^{(3)}_1 = 0$ and $\D \gamma^{(3)}_1 = 5/4$.

We now analyse the distribution of $\gamma^{(3)}_k$, $k\ge 2$. In order not to make the notation cumbersome, we assume that $D_0 = C_0 = 0$ and $M_0 = 1$, so $Z_0 = 1$. The case $Z_0 = -1$ is analogous by the symmetry. Then the mouse can make either zero jumps or exactly one jump before the next visit of the profess to an auxiliary state. In the case of zero jumps we have
\begin{align*}
\PB\{\gamma^{(3)}_1 = \pm 0,  \ Z_1 = 0 \ | \ Z_0 = 1\} = \PB\{D_1 = C_1 = 1\} = \frac{1}{4},
\end{align*}
\begin{align*}
\PB\{\gamma^{(3)}_1 = \pm 0,  \ Z_1 = 1 \ | \ Z_0 = 1\} = \PB\{C_1 = -1\} = \frac{1}{2}.
\end{align*}
In the case of exactly one jump we have
\begin{align*}
\PB\{\gamma^{(3)}_1 = 1,  \ Z_1 = 1 \ | \ Z_0 = 1\} = \PB\{D_1 =-1, C_1 = 1, M_2 =2 \} = \frac{1}{8},
\end{align*}
\begin{align*}
\PB\{\gamma^{(3)}_1 = -1,  \ Z_1 = -1 \ | \ Z_0 = 1\} = \PB\{D_1 =-1, C_1 = 1, M_2 =0\} = \frac{1}{8}.
\end{align*}
Thus, given $Z_0 = 1$ we have
\begin{align*}
\PB\{Z_1 = 0\} = \frac{1}{4}, \ \PB\{Z_1 = 1\} = \frac{5}{8} \ \text{and} \ \PB\{Z_1 = -1\} = \frac{1}{8}, 
\end{align*}
\begin{align}\label{eq:Z_0 = 1}
\PB\{\gamma^{(3)}_1 = 0\} = \frac{3}{4} \ \text{and} \ \PB\{\gamma^{(3)}_1 = \pm 1\} = \frac{1}{8}.
\end{align}
Thus, $\Expect \left( \gamma^{(3)}_1 | Z_0 = 1\right)  = 0$ and $\D \left( \gamma^{(3)}_1| Z_0 =1\right)  = 1/4$.

Let us return to the case $D_0 = C_0 = M_0 = 0$. Combining the results $\eqref{eq:Z_0 = 0}$ and $\eqref{eq:Z_0 = 1}$ we get
\begin{multline*}
\Expect Y^{(3)}_1 = \Expect \sum_{k=1}^\nu \gamma^{(3)}_k = \Expect \sum_{k = 1}^\infty \left( \gamma^{(3)}_k I[\nu \ge k]\right) = \Expect \gamma^{(3)}_1 + \sum_{k = 2}^\infty \Expect \left( \gamma^{(3)}_k I[\nu \ge k]\right)\\
 = \sum_{k = 2}^\infty \Expect \left( \gamma^{(3)}_k I[Z_{k-1} = 1] + \gamma^{(3)}_kI[Z_{k-1} = -1]\right) = 0.
\end{multline*}
In a similar manner we transform the second moment:
\begin{align}\label{eq:Second moment of Y_1}
\Expect (Y^{(3)}_1)^2 = \sum_{k=1}^\infty \Expect\left( ( \gamma^{(3)}_k)^2 I[\nu\ge k]\right) + 2\sum_{k=1}^\infty \sum_{m=k+1}^\infty \Expect \left(  \gamma^{(3)}_k \gamma^{(3)}_m I[\nu \ge m]\right). 
\end{align}
If we fix the values of $\{Z_k\}_{k=0}^\infty$ the r.v.'s $ \gamma^{(3)}_k$ and $ \gamma^{(3)}_m$ become independent. Combined with the fact that $\Expect \left( \gamma^{(3)}_m | Z_{m-1} = \pm 1 \right) = 0$, we get that the second sum in $\eqref{eq:Second moment of Y_1}$ equals zero. Now we can use the conditional second moments obtained above and the fact that the r.v. $\nu$ has a geometric distribution with parameter $1/4$. We obtain
\begin{align*}
\Expect (Y^{(3)}_1)^2 = \Expect (\gamma^{(3)}_1)^2 + \sum_{k=2}^\infty \Expect\left(  (\gamma^{(3)}_k)^2| \ Z_{k-1} = \pm 1\right)  \PB\{\nu\ge k\} = \frac{5}{4} + \frac{1}{4}(\Expect \nu -1) = 2.
\end{align*}
Finally, we observe that $|\gamma^{(3)}_1| \le 2$. Therefore,  $|Y^{(3)}_1| = |M_{J^{(3)}_1}| \le 2\nu$ a.s. and $|Y^{(3)|}$ has a finite exponential moment.  In particular, the following holds. 
\begin{proposition}\label{moments of Y_m}
We have $\Expect Y^{(3)}_1 = 0$, $\D Y^{(3)}_1 = 2$ and $\Expect |Y^{(3)}_1|^m < \infty$, for any $m\geq 3$.
\end{proposition}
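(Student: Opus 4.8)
The plan is to exploit the decomposition $Y^{(3)}_1 = \sum_{k=1}^{\nu} \xi_k$, where $\nu$ is geometric with parameter $1/4$ and the $\xi_k$ are (conditionally on the auxiliary chain $\{Z_k\}$) the one-step displacements of the mouse over each excursion of length $t_k$. The key structural observation is that $\xi_k = 0$ unless the step involving the mouse's jump actually occurs, and that by construction $|\xi_k| \le 2$ almost surely — each $\xi_k$ is bounded. Hence $Y^{(3)}_1$ is a sum of a \emph{bounded} number of \emph{bounded} terms, except that the number of terms $\nu$ is random; but $\nu$ has all moments finite (geometric), so Wald-type bounds and Minkowski's inequality immediately give $\Expect|Y^{(3)}_1|^m < \infty$ for every $m \ge 1$. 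Concretely, $|Y^{(3)}_1| \le 2\nu$, so $\Expect|Y^{(3)}_1|^m \le 2^m \Expect \nu^m < \infty$.

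For the mean, the plan is to show $\Expect Y^{(3)}_1 = 0$ by a symmetry argument. First I would condition on the whole trajectory of the auxiliary chain $\{Z_k\}_{k=0}^{\nu}$ and on the excursion lengths. Given this information, each increment $\xi_k$ of the mouse, when nonzero, is symmetric about $0$: inspecting the conditional laws tabulated just before the proposition, one sees that in every case $\PB\{\xi_k = m \mid \cdot\} = \PB\{\xi_k = -m \mid \cdot\}$, because the mouse's jump variable $\gamma^{(3)}$ is symmetric and independent of the event that the cat and mouse meet, and because the sign pattern of $Z$ is itself symmetric under the reflection $(Z_1,\dots,Z_\nu) \mapsto (-Z_1,\dots,-Z_\nu)$, which the transition matrix $P_Z$ respects (swapping the roles of states $1$ and $-1$ is a symmetry of $P_Z$). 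Therefore $\Expect[\xi_k \mid \mathcal{F}] = 0$ for the appropriate conditioning $\sigma$-field $\mathcal{F}$, and summing over $k \le \nu$ — legitimate because $\{\nu \ge k\}$ is measurable with respect to the conditioning data — yields $\Expect Y^{(3)}_1 = 0$.

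More carefully, I would set $\mathcal{G} = \sigma(\nu, Z_1, \ldots, Z_\nu)$ and argue that
\begin{equation*}
\Expect Y^{(3)}_1 = \Expect\!\left[\sum_{k=1}^{\nu} \Expect\big[\xi_k \,\big|\, \mathcal{G}\big]\right] = 0,
\end{equation*}
since each conditional expectation vanishes by the reflection symmetry. The finiteness of all moments then justifies every interchange of sum and expectation. The main obstacle, I expect, is purely bookkeeping: one must verify that the reflection $Z \mapsto -Z$ genuinely leaves the \emph{joint} law of $(\{Z_k\}, \{\xi_k\}, \{t_k\})$ invariant, i.e.\ that reflecting the auxiliary chain corresponds to an honest measure-preserving transformation of the underlying driving noise $\{\gamma^{(1)}_n, \gamma^{(2)}_n, \gamma^{(3)}_n\}$ — this follows because $\gamma^{(2)}$ and $\gamma^{(3)}$ are symmetric and the map amounts to flipping the sign of $C_n - M_n$, which can be realised by negating $\gamma^{(3)}$ on the relevant steps — but it requires care to state cleanly given that $t_k$ and $\xi_k$ are built from the same randomness. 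An alternative, possibly cleaner, route is to avoid the abstract symmetry entirely and simply read off from the tabulated conditional distributions that $\Expect[\xi_2 \mid Z_1, Z_2] = 0$ in every one of the listed cases, combine this with the fact (from $P_Z$) that $Z_1 = 1$ and $Z_1 = -1$ have equal probability $3/8$ and symmetric continuations, and conclude by the strong Markov property applied at the times $\widehat t_k$.
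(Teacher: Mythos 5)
Your moment bound is correct and is essentially the intended argument: during one excursion the mouse jumps at most twice, so $|\xi_k|\le 2$, hence $|Y^{(3)}_1|\le 2\nu$ with $\nu$ geometric, and $\Expect|Y^{(3)}_1|^m\le 2^m\Expect \nu^m<\infty$ for every $m$.

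The mean-zero half, as written, rests on a false step. You claim that conditionally on the signed trajectory $(Z_1,\dots,Z_\nu)$ (your $\mathcal{G}$) each $\xi_k$ is symmetric about $0$, i.e.\ $\Expect[\xi_k\mid\mathcal{G}]=0$, and your ``cleaner'' alternative asserts $\Expect[\xi_2\mid Z_1,Z_2]=0$ in every tabulated case. The paper's own table contradicts this: given $Z_1=\pm1$, $Z_2=\mp1$ one has $\xi_2=\mp1$ almost surely, so $\Expect[\xi_2\mid Z_1=1,Z_2=-1]=-1$; the cases $(Z_1,Z_2)=(\pm1,\pm1)$ are likewise not individually symmetric. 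The reflection $Z\mapsto -Z$ maps the conditioning event to a \emph{different} event, so it says nothing about the conditional law on a fixed signed path. What does hold, and what repairs the argument, is the joint statement: negating all three driving sequences $\gamma^{(1)},\gamma^{(2)},\gamma^{(3)}$ simultaneously preserves their law, fixes $V_n$, the excursion lengths $t_k$ and $\nu$, and sends $(Z_k,\xi_k)\mapsto(-Z_k,-\xi_k)$; hence $Y^{(3)}_1\overset{d}{=}-Y^{(3)}_1$, and integrability (from your first half) gives $\Expect Y^{(3)}_1=0$. Equivalently, pair the events $\{(Z_1,\dots,Z_\nu)=z\}$ and $\{(Z_1,\dots,Z_\nu)=-z\}$, which have equal probability because $P_Z$ is invariant under swapping the states $1$ and $-1$, and whose conditional means of $\xi_k$ are negatives of each other; only after this averaging does the expectation vanish. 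Note also that your proposed realisation of the symmetry, negating $\gamma^{(3)}$ on the relevant steps only, does not work: changing the mouse's jumps alters $|C_n-M_n|$ and hence the meeting pattern and the $t_k$; the involution must flip all three sequences at once. With these corrections your proof goes through and is in substance the argument the paper intends.
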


\section{Proofs of main results}\label{sect:proofs}

In this section we provide proofs of our main results.

\subsection{Proofs of \thr{GeneralMouse} and \thr{general cat and mouse}}\label{sect:generalised mouse}

We start with the general idea of proofs of Theorems \ref{thr:GeneralMouse} and \ref{thr:general cat and mouse}.
For $i=1,2$, let $S^{(i)}_0 = 0$ and $S^{(i)}_n = \sum_{k=1}^n \xi^{(i)}_k$, for $n\geq1$. Let 
\begin{align*}
\tau(0) = 0 \ \text{and} \ \tau(n) = \inf\{m > \tau(n-1): \ S^{(1)}_m = S^{(2)}_n\}, \ \text{for $n\ge 1$}.
\end{align*}
Since $\{\xi^{(1)}_k\}_{k=1}^\infty$ and $\{\xi^{(2)}_k\}_{k=1}^\infty$ are independent sequences of i.i.d. r.v.'s, we have that $\tau(n) - \tau(n-1) \overset{d}{=} \tau(1)$, for $n \ge 1$. 

Let $\eta(t) = \max\{k\geq 0 :\ \tau(k) \leq t\}$ for $t\geq 0$. Define a continuous-time process $M'(t)$ by
\begin{align*}
M'(t) = 0 \ \text{for $t\in [0, 1)$ and} \ M'(t) = S^{(2)}_{\eta(t-1) + 1} =  \sum_{k=1}^{\eta(t-1)+1} \xi^{(2)}_k, \ \text{for $t\geq 1$.}
\end{align*}
It is straightforward to verify that $\{M'(n), \ n\ge 0\} \overset{d}{=} \{M(n), \ n\ge 0\}$. In the rest of the section we will omit the dash and simply write $M(t)$. The process $\{\widehat{M}(t)\}_{t\geq 0} = \{M'(t+1)\}_{t\geq 0}$ is a so-called \textit{oracle continuous-time random walk} (see, e.g., Jurlewicz \textit{et al.} (2012)).

First, consider the case $\Expect\xi^{(2)}_1 = 0$. We want to show that
\begin{equation}\label{eq:asymp_indep}
\left(\frac{S^{(2)}_n}{b(n)}, \frac{\tau(n)}{n^2} \right) \Rightarrow (A^{(2)}, D^{(2)}), \ \text{as $n\to\infty$.}
\end{equation} 
Given that, we will show that the first part of Theorem \textbf{\ref{thr:GeneralMouse}} follows from the next proposition. 
\begin{proposition}\label{Jurlewicz et al}
(Theorem 3.1, Jurlewicz \textit{et al.} (2012)). Assume \eq{asymp_indep} holds. Then 
\begin{align*}
\left\lbrace \frac{\widehat{M}(nt)}{b(\sqrt{n})}, t\geq 0 \right\rbrace = \left\lbrace \frac{M(nt + 1)}{b(\sqrt{n})}, t\geq 0 \right\rbrace \overset{\mathcal{D}}{\Rightarrow} \left\lbrace A^{(2)}(E^{(2)}(t)), t\geq 0 \right\rbrace, \ \text{as $n\to\infty$. }
\end{align*}
\end{proposition}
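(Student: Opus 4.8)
The statement is essentially Theorem~3.1 of Jurlewicz \textit{et al.} (2010), so the plan is to recognise $\widehat{M}$ as an \emph{oracle} continuous-time random walk in their sense and to check that hypothesis \eq{asymp_indep} in fact delivers the \emph{functional} two-dimensional input that theorem requires. Recall that $\widehat{M}(t) = M(t+1) = S_{N(t)+1} = \sum_{k=1}^{N(t)+1}\gamma^{(2)}_k$, where $N(t)=\max\{n:T^{(2)}_n\le t\}$ and $T^{(2)}_n=\sum_{k=1}^n J^{(2)}_k$; by time-homogeneity of the chain the vectors $(\gamma^{(2)}_k,J^{(2)}_k)_{k\ge 1}$ are i.i.d. (with $\gamma^{(2)}_k$ and $J^{(2)}_k$ dependent for a given $k$), so $\widehat{M}$ is precisely the oracle CTRW generated by these coupled jump/waiting-time pairs. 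The three steps are: (i) upgrade \eq{asymp_indep} to joint functional convergence of the partial-sum process; (ii) verify the remaining structural hypotheses of Theorem~3.1; (iii) read off the limit.

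For step (i) I would establish
\[
\left\{\left(\frac{S_{[\sqrt{c}\,t]}}{b(\sqrt{c})},\ \frac{T^{(2)}_{[\sqrt{c}\,t]}}{c}\right),\, t\ge0\right\}\ \overset{\mathcal{D}}{\Rightarrow}\ \left\{\big(A^{(2)}(t),D^{(2)}(t)\big),\, t\ge0\right\},\qquad c\to\infty,
\]
in $D[[0,\infty),\mathbb{R}^2]$ with the $\mathcal{J}_1$-topology. Since $(\gamma^{(2)}_k,J^{(2)}_k)$ are i.i.d. $\mathbb{R}^2$-valued vectors, the single-time convergence \eq{asymp_indep} — which merely says that their common law lies in the domain of attraction of the limit law of $(A^{(2)},D^{(2)})$ — already forces this functional statement, by the classical functional limit theorem for partial sums of i.i.d. random vectors converging to a L\'evy process (see Becker-Kern \textit{et al.} (2004) and Jurlewicz \textit{et al.} (2010)). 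Convergence of finite-dimensional distributions follows from independence of the increments of $(S,T^{(2)})$ over disjoint index blocks, each scaled block-increment converging by \eq{asymp_indep}, together with $b(\sqrt{c}\,t)/b(\sqrt{c})\to t^{1/\alpha}$ (regular variation of $b$) and $([\sqrt{c}\,t])^2/c\to t^2$ (index-$1/2$ stability of $D^{(2)}$), which make the limiting block-increments agree with those of the L\'evy process $(A^{(2)}(t),D^{(2)}(t))$; $\mathcal{J}_1$-tightness is the standard invariance principle for i.i.d. sums. I would stress that one must obtain this two-dimensional statement directly from the two-dimensional i.i.d. structure and \emph{not} try to assemble it from two one-dimensional convergences — the latter is illegitimate when the limit coordinates have jumps, which is exactly why \eq{asymp_indep} is phrased as a joint convergence.

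For steps (ii)–(iii): $D^{(2)}$ is a stable subordinator of index $1/2$, hence a.s. strictly increasing and unbounded, so its right-continuous inverse $E^{(2)}(s)=\inf\{t:D^{(2)}(t)>s\}$ is a.s. continuous and nondecreasing, the first-passage and composition maps involved are a.s. continuous at the limit, and $A^{(2)}(E^{(2)}(\cdot))$ is a well-defined c\`adl\`ag process. These are the hypotheses under which Theorem~3.1 of Jurlewicz \textit{et al.} (2010) applies to the oracle CTRW $S_{N(\cdot)+1}$; applying it with space normalisation $b(\sqrt{c})$ and time normalisation $c$ — under which the number of jumps by time $ct$ is of order $\sqrt{c}$, so that $N(ct)/\sqrt{c}\Rightarrow E^{(2)}(t)$ — gives
\[
\left\{\frac{\widehat{M}(ct)}{b(\sqrt{c})},\, t\ge0\right\}=\left\{\frac{S_{N(ct)+1}}{b(\sqrt{c})},\, t\ge0\right\}\ \overset{\mathcal{D}}{\Rightarrow}\ \left\{A^{(2)}(E^{(2)}(t)),\, t\ge0\right\},\qquad c\to\infty,
\]
which is the assertion. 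I expect the main obstacle to be step (i): one has to be careful about which Skorokhod topology the joint process convergence holds in — it is genuinely a convergence in $D$ with the $\mathcal{J}_1$-topology to a process with jumps (both coordinates jump when $\alpha<2$, only $D^{(2)}$ when $\alpha=2$) — and one has to keep the two distinct normalisations and the reparametrisation $n\leftrightarrow[\sqrt{c}\,t]$ aligned so that the input handed to Theorem~3.1 is precisely the one it demands. The rest is bookkeeping together with the cited results.
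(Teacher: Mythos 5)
Your proposal is correct and takes essentially the same route as the paper: the paper proves this proposition simply by quoting Theorem 3.1 of Jurlewicz \textit{et al.} (2010), applied to the oracle continuous-time random walk $\widehat{M}(t)=S_{N(t)+1}$ built from the i.i.d.\ pairs $(\gamma^{(2)}_k,J^{(2)}_k)$ with space scale $b(\sqrt{c})$ and time scale $c$, exactly as you do. Your step (i), upgrading the single-time joint convergence \eq{asymp_indep} to the functional bivariate limit, together with the continuity/inverse-subordinator checks in (ii)--(iii), is the standard machinery carried out inside the cited theorem, so it supplies detail the paper leaves to the reference but introduces no genuinely different argument.
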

A similar result was proven in Theorem 3.6 from Henry and Straka (2011).

We will now show that relation \eq{asymp_indep} holds and that r.v.'s $A^{(2)}$ and $D^{(2)}$ are independent. We show that for certain functions $f_1$ and $f_2$,
\begin{multline}\label{eq:joint characteristic function}
\Expect \exp\left( i\left(\lambda_1\frac{S^{(2)}_n}{b(n)} + \lambda_2 \frac{\tau(n)}{n^2}\right) \right) = \Expect \exp\left( i\left(\lambda_1\frac{\xi^{(2)}_1}{b(n)} + \lambda_2 \frac{\tau(1)}{n^2}\right)\right)^n\\
 = \left(1 + \frac{f_1(\lambda_1) + f_2(\lambda_2)}{n} + o\left(\frac{1}{n} \right) \right)^n,
\end{multline}
as $n\to\infty$, for any $\lambda_1, \lambda_2 \in \mathbb{R}$. Indeed, convergence of characteristic functions is equivalent to weak convergence of r.v.'s and for independence of r.v.'s it is sufficient to verify that the characteristic function of the sum is equal to the product of respective characteristic functions. Since the right-hand side of \eq{joint characteristic function} converges to $\exp(f_1(\lambda_1))\exp(f_2(\lambda_2))$, it will prove the convergence and the independence of the limits $A^{(2)}$ and $D^{(2)}$.  

\textbf{5.1.1} We now present a proof of Theorem \textbf{\ref{thr:GeneralMouse}}. From condition \eq{stable law A} we have the weak convergence of $S_n^{(2)} / b(n)$  to the r.v. $A^{(2)}$. Again, this is equivalent to the convergence of characteristic functions. Thus,  \eq{stable law A} implies
\begin{align*}
\Expect\exp\left(i\lambda_1\frac{\sum_{k=1}^n\xi^{(2)}_k}{b(n)} \right) = \left[ \Expect\exp\left(i\lambda_1\frac{\xi^{(2)}_1}{b(n)} \right)\right]^n  \to \Expect\exp\left(i\lambda_1 A^{(2)}\right), \ \text{as $n\to\infty$.}
\end{align*}
Additionally, if $B^n(n) \to z$, as $n\to\infty$, then $n\log B(n) \to \log z$, which leads to $\log B(n) \sim n^{-1}\log z $ and hence, finally, $B(n) \sim 1 + n^{-1}\log z$, as $n\to\infty$. Thus, we have the following
\begin{equation}\label{eq:StepCharFunction}
\Expect\exp\left(i\lambda_1\frac{\xi^{(2)}_1}{b(n)} \right) \sim 1 + \frac{l_1(\lambda_1)}{n}, \ \text{as $n\to\infty$,}
\end{equation}
where $l_1(\lambda) = \log \Expect \exp(i\lambda A^{(2)})$, the logarithmic characteristic function of $A^{(2)}$.

As in \sectn{CatandMouseModel}, we reformulate the distribution of r.v. $\tau(1)$ using the time needed for the simple random walk to hit a fixed point.  Let $\{\tau^{(1)}_k\}_{k=1}^\infty$ be independent copies of $\tau$, the time needed  for the simple random walk to hit $0$ if it starts from $1$, independent of $\{\xi^{(2)}_n\}_{n=1}^\infty$. Given $\xi^{(2)}_1 = m \neq 0$, r.v. $\tau(1)$ is the time needed for the random walk $S^{(1)}_n$ to reach $m$ from zero, and thus, $\tau(1)  \overset{d}{=}  \sum_{k=1}^{|m| } \tau^{(1)}_k$. Given $\xi^{(2)}_1 = 0$, we have $\tau(1)  \overset{d}{=} 1 + \tau^{(1)}_1$. Then we have the following relation for $\tau(1)$:
\begin{align*}
\tau(1) \overset{d}{=} I[\xi^{(2)}_1 \neq 0] \sum_{k=1}^{|\xi^{(2)}_1| } \tau^{(1)}_k + I[\xi^{(2)}_1 = 0](1 + \tau^{(1)}_1).
\end{align*}
Since $\PB\{\tau^{(1)}_1 > n\} \sim \sqrt{2/(\pi n)}$, as $n\to\infty$, we conclude from Proposition $\ref{Wald's identity case for infinite mean}$ (see Appendix) that 
\begin{align*}
\PB\{\tau(1) > n\} \sim (\Expect|\xi^{(2)}_1| + \PB\{\xi^{(2)}_1 = 0\}) \PB\{\tau > n\}.
\end{align*}
Thus, there exists a r.v. $D^{(2)}$ having a stable distribution with index $1/2$ such that
\begin{align*}
\frac{\tau(n)}{n^2}  \Rightarrow D^{(2)}, \ \text{as $n\to\infty$}.
\end{align*}

Using the same argument as for \eq{StepCharFunction} we get
\begin{equation}\label{eq:WaitTimeCharFunction}
\Expect \exp\left(i\lambda_2\frac{\tau}{n^2} \right) \sim 1 + \frac{l_2(\lambda_2)}{n}, \ \text{as $n\to\infty$,}
\end{equation}
where $\lambda_2(\lambda) = \log \Expect \exp(i\lambda D^{(2)}/ (\Expect|\xi^{(2)}_1| + \PB\{\xi^{(2)}_1 = 0\}))$, the logarithmic characteristic function of $D^{(2)}/ (\Expect|\xi^{(2)}_1| + \PB\{\xi^{(2)}_1 = 0\})$.
We proceed as follows:
\begin{align*}
\Expect \exp &\left( i\left[\lambda_1\frac{\xi^{(2)}_1}{b(n)} + \lambda_2\frac{ \tau(1)}{n^2}\right]\right)\\
& =\sum_{-\infty}^{\infty} \exp\left(i\lambda_1\frac{k}{b(n)} \right)\PB\{\xi^{(2)}_1 = k\}\Expect \exp\left( i\lambda_2\frac{\tau(1)}{n^2} | \xi^{(2)}_1 = k \right) \\
& = \PB\{\xi^{(2)}_1 = 0\}\Expect \exp\left( i\lambda_2 \frac{1 + \tau}{n^2}\right) +\\
& + \sum_{k\neq 0} \exp\left(i\lambda_1\frac{k}{b(n)} \right)\PB\{\xi^{(2)}_1 = k\}\left(\Expect \exp\left( i\lambda_2\frac{\tau}{n^2} \right)\right)^{|k|}.
\end{align*}
In order to transform the last sum in the last equation, we use \eq{WaitTimeCharFunction} and get that, uniformly in $m>0$,
\begin{align*}\label{CM_simple_unform_bound}
\left(\Expect \exp\left( i\lambda_2\frac{\tau}{n^2} \right)\right)^m  & = \left( 1 + \frac{l_2(\lambda_2)}{n} + o\left(\frac{1}{n}\right)\right)^m \notag \\
& = \exp\left( m\ln(1 + \frac{l_2(\lambda_2)+o(1)}{n})\right) \notag \\
& = \exp\left( \frac{m l_2(\lambda_2)}{n}(1 + o(1)) \right) \notag \\
& = 1 + \frac{m l_2(\lambda_2)(1 + o(1))}{n} +  \frac{1}{n^2}\sum_{j=2}^\infty \frac{(m l_2(\lambda_2)(1 + o(1)))^j}{n^{j-2} j!},
\end{align*}
as $n\to\infty$. Since the latter equation is uniform in $m>0$, we get
\begin{align*}
\sum_{k\neq 0} & \exp\left(i\lambda_1\frac{k}{b(n)} \right)\PB\{\xi^{(2)}_1 = k\}\left(\Expect \exp\left( i\lambda_2\frac{\tau}{n^2} \right)\right)^{|k|}\\  & = \left(\Expect\exp\left(i\lambda_1\frac{\xi^{(2)}_1}{b(n)} \right) -\PB\{\xi^{(2)}_1 = 0\}\right)\\
& + \frac{l_2(\lambda_2)}{n} \sum_{k\neq 0} |k| \exp\left(i\lambda_1\frac{k}{b(n)} \right)\PB\{\xi^{(2)}_1 = k\} + \frac{1}{n^2}\sum_{k\neq 0}\sum_{j=2}^\infty \frac{(k l_2(\lambda_2)(1 + o(1)))^j}{n^{j-2} j!}+o\left(\frac{1}{n}\right),
\end{align*}
as $n\to\infty$. Now we use the fact that if $\sum_{-\infty}^{\infty} A_n = \sum_{-\infty}^{\infty} B_n + \sum_{-\infty}^{\infty} C_n$ and if series $\sum_{-\infty}^{\infty}A_n$ and $\sum_{-\infty}^{\infty}B_n$ converge, then $\sum_{-\infty}^{\infty} C_n$ converges too. Thus,
$$\frac{1}{n^2}\sum_{k\neq 0}\sum_{j=2}^\infty \frac{(k l_2(\lambda_2)(1 + o(1)))^j}{n^{j-2} j!} = o\left(\frac{1}{n}\right)$$
and 
\begin{align*}
\sum_{k\neq 0} & \exp\left(i\lambda_1\frac{k}{b(n)} \right)\PB\{\xi^{(2)}_1 = k\}\left(\Expect \exp\left( i\lambda_2\frac{\tau}{n^2} \right)\right)^{|k|}\\
& =\left(\Expect\exp\left(i\lambda_1\frac{\xi^{(2)}_1}{b(n)} \right) -\PB\{\xi^{(2)}_1 = 0\}\right)
  +\Expect|\xi^{(2)}_1| \frac{l_2(\lambda_2)}{n}  + o\left(\frac{1}{n}\right),
\end{align*}
as $n\to\infty$. Using \eq{StepCharFunction} and \eq{WaitTimeCharFunction}, we have
\begin{multline*}
\Expect \exp\left( i\left[\lambda_1\frac{\xi^{(2)}_1}{b(n)} + \lambda_2\frac{ \tau(1)}{n^2}\right]\right)\\
 = 1 + \frac{l_1(\lambda_1)}{n} + (\Expect|\xi^{(2)}_1| + \PB\{\xi^{(2)}_1 = 0\}) \frac{l_2(\lambda_2)}{n} + o\left(\frac{1}{n}\right),
\end{multline*}
as $n\to\infty$. We have proved that equation \eq{joint characteristic function} holds with $f_1(\lambda_1) = l_1(\lambda_1)$ and $f_2(\lambda_2) = (\Expect|\xi^{(2)}_1| + \PB\{\xi^{(2)}_1 = 0\}) l_2(\lambda_2)$. Therefore, equation \eq{asymp_indep} holds and we can use Proposition $\textbf{\ref{Jurlewicz et al}}$ to prove the first part of Theorem \textbf{\ref{thr:GeneralMouse}}.

We now turn to the second part and assume $\Expect\xi^{(2)} = \mu\neq 0$. Then the above arguments are applicable to $\sum_{k=1}^{\eta(t) + 1} (\xi^{(2)}_k - \mu)$. Thus, we have shown that the process $\left( \left(\sum_{k=1}^{\eta(nt) + 1} (\xi^{(2)}_k - \mu) \right)/ b(\sqrt{n}), \ t\ge 0 \right) $ weakly converges to the limiting one (see Appendix A for corresponding definitions). Since $\mu < \infty$, we have $b(n) = o(n)$, as $n\to\infty$. Indeed, by the Strong Law of Large Numbers,
\begin{equation*}
\frac{\sum_{k=1}^{n} (\xi^{(2)}_k - \mu)}{n} \overset{a.s.}{\to} 0, \ \text{as $n\to\infty$.}
\end{equation*}
Since $A^{(2)}$ in \eq{stable law A} is not deterministic, the denominator in the LHS of \eq{stable law A} must be an $o(n)$ function. Therefore the process
\begin{align*}
\left( \frac{\sum_{k=1}^{\eta(nt) + 1} (\xi^{(2)}_k - \mu)}{\sqrt{n}}, \ t\ge 0 \right) =\left(  \frac{\sum_{k=1}^{\eta(nt) + 1} (\xi^{(2)}_k - \mu)}{b(\sqrt{n})} \frac{b(\sqrt{n})}{\sqrt{n}} , \ t\ge 0\right)
\end{align*}
converges to the zero-valued process. Thus, it follows from the representation
\begin{align*}
\frac{\widehat{M}(nt)}{\sqrt{n}} = \frac{\sum_{k=1}^{\eta(nt)+1} (\xi^{(2)}_k - \mu)}{\sqrt{n}} + \frac{\mu (\eta(nt) +1)}{\sqrt{n}}
\end{align*}
and from the Corollary of Theorem 3.2 from Meerschaert and Scheffler (2004) that
\begin{align*}
\left\{ \frac{\widehat{M}(nt)}{\sqrt{n}}, \ t\geq 0 \right\} \overset{\mathcal{D}}{\Rightarrow} \{\mu E^{(2)}(t), \ t\geq 0\}, \ \text{as $n\to\infty$}.
\end{align*}

\textbf{5.1.2} We now present a proof of \thr{general cat and mouse}. Under the assumption of the finiteness of second moments, we can extend our result to the case where both $\xi^{(1)}$ and $\xi^{(2)}$ have general distributions. Assume now that $\{S^{(1)}_n\}_{n=0}^\infty = \{\sum_{k=1}^n \xi^{(1)}_k\}_{n=0}^\infty$ is an aperiodic random walk with zero-mean and finite-variance-$\sigma_1^2$ increments. The theory of general random walks and their hitting times is well developed. Nevertheless, uniform in terms of the hitting point results are rather scarce. It follows from Section \textbf{3.3} of Uchiyama (2011a) that, uniformly in $x$,
\begin{equation}\label{eq:general_char_func_expansion}
\Expect\left[ \exp\left( it  \tau(1)\right) | \  \xi^{(2)}_1= x\right] = 1 - (a^*(x) + e_x(t))(\sigma_1\sqrt{-2it} + o(\sqrt{|t|})), \ \text{as $t\to 0$,}
\end{equation}
where
\begin{align}
a^*(x) = 1 + \sum_{n=1}^\infty \left(\PB\{S^{(1)}_n = 0\} - \PB\{S^{(1)}_n = -x\} \right),
\end{align}
\begin{align}
e_x(t)= c_x(t) + is_x(t),
\end{align}
\begin{align}\label{eq:Important inequality from Uchiyama}
|c_x(t)| = O\left(x^2\sqrt{|t|} \right), \ \text{as $t\to 0$, uniformly in $x$,} 
\end{align}
\begin{align}\label{eq:Last Uchiyama asymptotic}
s_0(t) = 0 \ \text{and} \ \frac{s_x(t)}{x} = o(1), \ \text{as $t\to 0$, uniformly in $x\neq 0$.}
\end{align}

Following steps similar to those used in the previous part we take $t = \lambda_2 / n^2$ and, eventually, let $n$ become large. A very important relation here is \eq{Important inequality from Uchiyama}. When we take the characteristic function $\Expect \exp\left( i\left[ \lambda_1\frac{\xi^{(2)}_1}{\sigma_2\sqrt{n}} + \lambda_2 \frac{\tau(1)}{n^2}\right]\right)$ and start to separate it into different summands, the relation \eq{Important inequality from Uchiyama} leads to the summand (see details below)
\begin{align*}
\sum_{x \in \mathbb{Z}} O\left(\frac{x^2}{n^2}\right) \PB\{\xi^{(2)}_1 = x\}, \ \text{as $n\to\infty$,}
\end{align*}
and this is the main reason why we need to assume that $\xi^{(2)}_1$ has a finite second moment.

Assume now that $\Expect \xi^{(2)}_1 = 0$ and $\sigma_2 = \D \xi^{(2)}_1 < \infty$. We have (see, e.g., Proposition 7.2 from Uchiyama (2011b))
\begin{align}\label{eq:Asymptotic for a*}
\sigma^2_1(a^*(x) - I(x=0)) \sim |x|, \ \text{ as $|x|\to\infty$.}
\end{align} 
As a consequence, we get $\Expect a^*(\xi^{(2)}_1) < \infty$. Let $p^{(2)}(x) = \PB\{\xi^{(2)}_1 = x\}$. Then the total probability formula gives us

\begin{multline*}
\Expect \exp\left( i\left[ \lambda_1\frac{\xi^{(2)}_1}{\sigma_2\sqrt{n}} + \lambda_2 \frac{\tau(1)}{n^2}\right]\right)\\
 = \sum_{x\in\mathbb{Z}}\exp\left( i\lambda_1 \left[\frac{x}{\sigma_2\sqrt{n}} \right] \right)\Expect\left[ \exp\left( i\left[\lambda_2 \frac{\tau(1)}{n^2}\right]\right) | \  \xi^{(2)}_1= x\right]  p^{(2)}(x).
\end{multline*}
Now we use \eq{general_char_func_expansion}-\eq{Last Uchiyama asymptotic} to get
\begin{align*}
\Expect \exp\left( i\left[ \lambda_1\frac{\xi^{(2)}_1}{\sigma_2\sqrt{n}} + \lambda_2 \frac{\tau(1)}{n^2}\right]\right) & =  \Expect \left[\exp\left(  i\lambda_1 \left[\frac{\xi^{(2)}_1}{\sigma_2\sqrt{n}} \right]\right)\right] -\\
 &  - \frac{\sigma_1 \sqrt{-2i\lambda_2}}{n} \Expect \left[a^*(\xi^{(2)}_1) \exp\left(  i\lambda_1 \left[\frac{\xi^{(2)}_1}{\sigma_2\sqrt{n}} \right]\right)\right] +\\
 & + O\left( \frac{1}{n^2} \Expect \left[\left( \xi^{(2)}_1\right)^2  \exp\left(  i\lambda_1 \left[\frac{\xi^{(2)}_1}{\sigma_2\sqrt{n}} \right]\right)\right]\right)  +\\
 & + o\left( \frac{1}{n} \Expect \left[\xi^{(2)}_1  \exp\left(  i\lambda_1 \left[\frac{\xi^{(2)}_1}{\sigma_2\sqrt{n}} \right]\right)\right]\right)  + o\left(\frac{1}{n} \right),
\end{align*}
as $n\to\infty$. Next, we use relation \eq{Asymptotic for a*} and the Taylor expansion for the exponent to get
\begin{align*}
 \Expect \left[a^*(\xi^{(2)}_1) \exp\left(  i\lambda_1 \left[\frac{\xi^{(2)}_1}{\sigma_2\sqrt{n}} \right]\right)\right] =  \Expect \left[a^*(\xi^{(2)}_1) \right] + o(1), \ \text{as $n\to\infty$.}
\end{align*}
Since $\Expect \xi^{(2)}_1 = 0$ and $\D  \xi^{(2)}_1 < \infty$, the Central Limit Theorem holds. Thus, we have the analogue of \eq{StepCharFunction} with $l_1$ being a logarithmic characteristic function of a r.v. with a standard normal distribution. Finally, we get
\begin{multline*}
\Expect \exp\left( i\left[ \lambda_1\frac{\xi^{(2)}_1}{\sigma_2\sqrt{n}} + \lambda_2 \frac{\tau(1)}{n^2}\right]\right) = 1 +  \frac{l_1(\lambda_1)}{n} - \frac{\sigma_1 \sqrt{-2i\lambda_2}}{n} \Expect \left[a^*(\xi^{(2)}_1) \right]  + o\left(\frac{1}{n}\right).
\end{multline*}
Thus, we proved equation \eq{joint characteristic function} for this case and the rest of the proof follows the same argument as in the previous case.

\subsection{Proof of \thr{DogCatMouse}}\label{sect:dog cat mouse}

Recall that random vectors $\{Y^{(3)}_n, J^{(3)}_n\}_{n=1}^\infty$ are i.i.d., where $Y^{(3)}_1 = \sum_{k=1}^\nu \gamma^{(3)}_k$ and $J^{(3)}_1 = T^{(3)}(1) =  U^{(3)}(\nu)$. We have
\begin{align*}
N(t) = \max\{n> 0: \ \sum_{k=1}^n J^{(3)}_k \leq t\} \ \text{and} \ \widetilde{M}(t) = \sum_{k = 1}^{N(t)} Y^{(3)}_k.
\end{align*}
From Propositions \textbf{\ref{tail of J}} and \textbf{\ref{moments of Y_m}} we have
\begin{align*}
\Expect Y^{(3)}_1 = 0, \ \D Y^{(3)}_1 = 2, \ \Expect (Y^{(3)}_1)^m < \infty, \ \text{for $m\geq 2$,} \ \text{and} \ \PB\{J^{(3)}_1 > n\} \sim \frac{2^{7/4}}{\Gamma(3/4) n^{1/4}},
\end{align*}
as $n\to\infty$. From Theorem 5.1 from Kasahara (1984) we have
\begin{equation}\label{eq:DCM_result for M_tilde}
\left\lbrace \frac{\widetilde{M}(nt)}{2^{-7/8}n^{1/8}\sqrt{\D Y^{(3)}_1}}, t\geq 0\right\rbrace \overset{\mathcal{D}}{\Rightarrow} \{B(E^{(3)}(t)), t\geq 0\}, \ \text{as $n\to\infty$,}
\end{equation}
where $B(t)$ is a standard Brownian motion, independent of $E^{(3)}(t)$.

We show now that \eq{DCM_result for M_tilde} holds with $M(nt)$ in the place of $\widetilde{M}(nt)$. It is sufficient to prove that, for any fixed $T>0$,
\begin{align*}
\frac{\max_{1 \leq k \leq [nT]}\left\lbrace \widetilde{M}_{k} - M_{k}\right\rbrace}{n^{1/8}} \overset{a.s.}{\to} 0, \ \text{as $n\to\infty$. }
\end{align*}

For $k=1,2,\ldots$, let $\nu_k$ be the number of visits of the set $\{(0,0),(0,1)\}$ by Markov chain $V_n$ within time interval $(T_{k-1}^{(3)}, T_k^{(3)}]$, and let $\varkappa_k$ be the total number of jumps of the mouse within this time interval. Further, let
$R_k = \max_{T^{(3)}(k-1) \leq l \leq T^{(3)}(k)} |\widetilde{M}_l - M_l|$. The triples 
$(\nu_k,\varkappa_k,R_k)$ are i.i.d., 
 the pairs $(\nu_k,\varkappa_k)$ are
i.i.d. copies of the pair $(\nu,\varkappa)$ introduced earlier,
and
\begin{align}\label{ququ}
R_k \le \varkappa_k \le 2\nu_k \ \ \mbox{a.s.}
\end{align}

Further, all random variables in \eqref{ququ} have a finite exponential moment, ${\mathbb E} \exp (cR_1)<\infty$ for some $c>0$. 

For $K>0$, let $\widehat{J}_j = \min (J^{(3)}_j, K)$.
Since ${\mathbb E} J^{(3)}_1=\infty$, one can choose $K$ such that $A:={\mathbb E} \widehat{J}_1> T$.
Let $\widehat{S}_n=\sum_{i=1}^n \widehat{J}_i$. Then, for any $C>0$,
\begin{align*}
{\mathbb P} (\eta (nT)+1 >n) &\le {\mathbb P}
(\widehat{S}_n\le nT) \le \left(e^{CT}{\mathbb E} e^{-C\widehat{J}_1} \right)^n,
\end{align*}
where the term in the parentheses on the right-hand side may be made less than $1$ for $C>0$ sufficiently small. Next,
\begin{align*}
{\mathbb P} 
( \max_{1\le k \le n} R_k \ge n^{1/8} \varepsilon )
&\le n
{\mathbb P} (R_1  \ge n^{1/8} \varepsilon )\le n {\mathbb E} e^{cR_1} \cdot e^{-c\varepsilon n^{1/8}}.
\end{align*}
Then, for any $\varepsilon >0$, 
\begin{align*}
{\mathbb P} \left(\frac{\max_{1 \leq k \leq [nT]}\left\lbrace \widetilde{M}_{k} - M_{k}\right\rbrace}{n^{1/8}} 
\ge \varepsilon\right) 
\le 
{\mathbb P} 
( \max_{1\le k \le n} R_k \ge n^{1/8}\varepsilon ) +
{\mathbb P} (\eta (nT)\ge n),
\end{align*}
where both terms on the right-hand side are summable in $n$.
Therefore, by the 0-1 law and by arbitrariness of
$\varepsilon >0$, \eqref{ququ} follows. This completes the proof of Theorem \textbf{\ref{thr:DogCatMouse}}.

\subsection{Proof of \thr{LinearFoodChain}}\label{sect:linear food chain}

Random process $X^{(1)}$ is a simple random walk on $\mathbb{Z}$ and for $j \in [2, N]$ we have
\begin{multline*}
\PB\{X^{(j)}(n) - X^{(j)}(n-1) = 1| \ X^{(j)}(n-1) = X^{(j-1)}(n-1)\}\\ = \PB\{X^{(j)}(n) - X^{(j)}(n-1) = - 1| \ X^{(j)}(n-1) = X^{(j-1)}(n-1)\} = \frac{1}{2}.
\end{multline*}

Let us give a new representation for such process. Let $X^{(1)}(0) = X^{(2)}(0) = \ldots = X^{(N)}(0) = 0$ and let r.v. $T_j(n)$ denote the time when $X^{(j)}$ makes $n$-th step. Let $T_j(0) = 0$. Note the difference between $T_j$ and $T^{(3)}$. Thus, $\{X^{(j)}(T_j(k))\}_{k=0}^\infty$ is a simple random walk on $\mathbb{Z}$ and if $X^{(j)}(n) \neq X^{(j)}(n-1)$ then $n\in \{T_j(k)\}_{k=1}^\infty$. Let 
$$\xi^{(j)}_k = X^{(j)}(T_j(k)) - X^{(j)}(T_j(k-1)) =X^{(j)}(T_j(k)) - X^{(j)}(T_j(k)-1)$$
for $j\geq 1$ and $k\geq 1$. By definition $\{\{\xi^{(j)}_k\}_{k=0}^\infty\}_{j=1}^N$ are mutually independent and equal $\pm 1$ w.p. $1/2$.

Since $X^{(1)}$ jumps every time, $T_1(k) = k$ for $k\geq 0$. Let $\tau$ be the time that the simple random walk goes from point $1$ to $0$. From \sectn{CatandMouseModel} it is easy to deduce that the time between meeting-time instants of the cat and the mouse has the same distribution as $\tau$. Thus, if we look at the system only at the times $\{T_j(k)\}_{k=0}^\infty$ the time between meeting time-instants of $X^{(j)}$ and $X^{(j+1)}$ has the same distribution as $\tau$. 

Let us define $\nu_j(n) = \max\{k\ge 0: \ T_j(k) \le n\}$, the number of time-instants up to time $n$ when $X^{(j)}$ changed its value. Then we can rewrite the dynamics of the $j$-th coordinate as
$$X^{(j)}(n) = \sum_{k=1}^{\nu_j(n)} \xi^{(j)}_{T_j(k)}.$$
Our assumptions on the distribution of the increments $\xi^{(j)}_k$, for $k\ge 1$, provide us with the next important property of our process.

\begin{proposition}
Sequences $\{T_j(k)\}_{k=0}^\infty$ and $\{\xi^{(j)}_k\}_{k=1}^\infty$ are independent for any $j\in\{1, \ldots, N\}$.
\end{proposition}

This property comes from the space-symmetry of the model. Indeed, for $j=1$ the result is trivial, since $\nu_1(n) = n$. We show the result for $j=2$ and then extend it onto $j>2$.
Define
\begin{align*}
{}^1\tau(0) = 0 \ \text{and} \ {}^1\tau(k) = \inf\{n > {}^1\tau(k-1): \ X^{(1)}(n) = X^{(2)}(n)\}, \ \text{for $k\ge 1$}.
\end{align*}
One can see that in our model $T_2(k) = 1 + {}^1\tau(k-1)$, for $k\ge 1$. In the time interval $[1, {}^1\tau(1)]$ the second coordinate changes its value only at the time $T_2(1) = 1$. Thus, the time ${}^1\tau(1)$ does not depend on $\xi^{(2)}_k$, for $k\ge 2$. Additionally, the trajectory $\{X^{(1)}(n)\}_{n=0}^\infty$ has the same distribution as $\{-X^{(1)}(n)\}_{n=0}^\infty$. Thus, 
\begin{align*}
\PB\{{}^1\tau(1) = n, \ \xi^{(2)}_1 = 1\} = \PB\{{}^1\tau(1) = n, \ \xi^{(2)}_1 = -1\}.
\end{align*}
As a corollary of the last equation, we get that ${}^1\tau(1)$ has the same distribution as the time that is needed  for the simple random walk to hit $0$ if it starts from $1$. This implies that
\begin{align}\label{tail_of_standard_time}
\PB\{{}^1\tau(1) > n\} \sim \sqrt{\frac{2}{\pi n}}, \ \text{as $n\to\infty$.}
\end{align}

From the symmetry of our model, it follows further that the sequence $\{{}^1\tau(k)\}_{k=0}^\infty$, and subsequently the sequences $\{T_2(k)\}_{k=0}^\infty$ and $\{\nu_2(n)\}_{n=1}^\infty$, do not depend on $\{\xi^{(2)}_k\}_{k=1}^\infty$ (and, therefore, on $\{\xi^{(j)}_k\}_{k\ge 1, j\ge2}$).

For the analysis of $\{T_j(k)\}_{k=0}^\infty$, $j > 2$, we need to define an 'embedded version' of ${}^1\tau(k)$. Let 
\begin{align*}
{}^j\tau(0) = 0 \ \text{and} \ {}^j\tau(k) = \inf\{m > {}^j\tau(k-1): \ X^{(j)}(T_j(m)) = X^{(j+1)}(T_j(m))\}, \ \text{for $k\ge 1$}.
\end{align*}
The process $\{{}^j\tau(k)\}_{k=0}^\infty$ counts the number of times that the process $X^{(j)}$ changed its value between the time-instants when $X^{(j)}$ and $X^{(j+1)}$ have the same value. Using the same argument as before, we get that the sequence $\{{}^j\tau(k)\}_{k=0}^\infty$ does not depend on $\{\xi^{(j+1)}_k\}_{k=1}^\infty$.

The $j$-th coordinate $X^{(j)}$ changes its value for the $k$-th time at a time-instant $n$ if and only if up to time $n-1$ processes $X^{(j-1)}$ and $X^{(j)}$ had the same value exactly $k-1$ times (not including $X^{(j-1)}(0) = X^{(j)}(0) = 0$) and the last time was at the time-instant $n-1$ (which also means that at the  time-instant $n-1$ the process $X^{(j-1)}$ changes its value). This can be rewritten as
\begin{align*}
T_j(k) = n \ \Leftrightarrow \ n-1 = T_{j-1}({}^{j-1}\tau(k-1)), \ \text{for $j\ge 2$, $k\ge 1$,}
\end{align*}
and thus $T_j(k) = 1 + T_{j-1}({}^{j-1}\tau(k-1))$. Thus, since sequences $\{T_2(k)\}_{k=0}^\infty$ and $\{{}^2\tau(k)\}_{k=0}^\infty$ do not depend on $\{\xi^{(j)}_k\}_{k\ge 1, j\ge 3}$, the same holds for $\{T_3(k)\}_{k=0}^\infty$. Therefore, using the induction, we get that the sequences $\{T_j(k)\}_{k=0}^\infty$ and $\{\xi^{(j)}_k\}_{k=1}^\infty$ are independent for any $j\ge 1$.

As a corollary of this result we get
\begin{align*}
X^{(j)}(n) = \sum_{k=1}^{\nu_j(n)} \xi^{(j)}_{T_j(k)} \overset{d}{=} \sum_{k=1}^{\nu_j(n)} \xi^{(j)}_k.
\end{align*}

Let ${}^j\eta(n) = \max\{k\geq 0 :\ {}^j\tau(k) \leq n\}$ for $n\geq 0$ and $j \in [1, \ldots,  N]$. Since the sequence $\{{}^j\tau(k)\}_{k=0}^\infty$ depends only on the sequence $\{\xi^{(j)}_k\}_{k=1}^\infty$, we have that $\{{}^j\eta(n)\}_{j=1}^{N-1}$ are i.i.d. r.v.'s.  For $n\geq 1$ and $j \in \{1, \ldots, N\}$ we have
\begin{align*}
\nu_j(n) = \max\{k\geq 0:\ T_j(k) \leq n\} & = \max\{k\geq 1:\ 1 + T_{j-1}({}^{j-1}\tau(k-1)) \leq n\}\notag\\
 & =  1 + \max\{k\geq 0:\ T_{j-1}({}^{j-1}\tau(k)) \leq n - 1\}\notag\\
 & =  1 + \max\{k\geq 0:\ {}^{j-1}\tau(k) \leq \nu_{j-1}(n - 1)\}\notag\\
 & =  1 + {}^{j-1}\eta(\nu_{j-1}(n-1)).
\end{align*}

For $n < N-1$ we can iterate the process and get
\begin{align*}
\nu_N(n) & =  1 + {}^{N-1}\eta(1 + {}^{N-2}\eta(\ldots (1 + {}^{N-n}\eta(0))\ldots))\notag\\
& =  1 + {}^{N-1}\eta(1 + {}^{N-2}\eta(\ldots (1 + {}^{N-n+1}\eta(1))\ldots))\notag\\
& \overset{d}{=}  1 + {}^{n-1}\eta(1 + {}^{n-2}\eta(\ldots (1 + {}^{1}\eta(1))\ldots))\notag\\
& =  \nu_n(n).
\end{align*}
For $n \ge N-1$ we have
\begin{align*}
\nu_N(n) = 1 + {}^{N-1}\eta(1 + {}^{N-2}\eta(\ldots + {}^1\eta(n-N+1))).
\end{align*}
We want to construct a process with the same distribution as $\{\nu_N(n)\}_{n=0}^\infty$ in a form of $\nu_{N-1}(\varphi(n))$, where process $\{\varphi(n)\}_{n=0}^\infty$ is independent of everything else. Define process $\{\eta(n)\}_{n=0}^\infty \overset{d}{=} \{{}^{N-1}\eta(n)\}_{n=0}^\infty$, which is independent of everything else. Then, for $n \ge N-1$, we have
\begin{align*}
\nu_N(n) \overset{d}{=} 1 + {}^{N-2}\eta(1 + {}^{N-3}\eta(\ldots + \eta(n-N+1))).
\end{align*}
Using the same formula for $\nu_{N-1}(m)$ with such $m$ that $m- (N-1) + 1 = 1 + {}^{N-1}\eta(n-N+1)$, we get
\begin{align*}
\nu_N(n) \overset{d}{=} \nu_{N-1}(N-1 +  \eta(n-N+1)), \ \text{for $n \ge N-1$.}
\end{align*}

Then, for $n\geq N$ we have $X^{(N)}(n) \overset{d}{=} X^{(N-1)}(N -1 + \eta(n-N+1))$. There exists a non-degenerate r.v. $\zeta$ (see Section XI.5 in Feller (1971b)) such that $\PB\{\zeta_i \geq y\} = G_{1/2}(9/y^2)$ and
\begin{align*}
\eta(n)\PB\{{}^{N-1}\tau(1) > n\} \Rightarrow \zeta, \text{as $n\to\infty$.}
\end{align*}
Therefore, using $(\ref{tail_of_standard_time})$ we get
\begin{align}\label{weak convergence of eta(n-N+1)}
\frac{j -1 + \eta(n-j+1)}{\sqrt{n}} = \frac{j -1 + \eta(n-j+1)}{\sqrt{n-j+1}}\frac{\sqrt{n-j+1}}{\sqrt{n}} \Rightarrow \sqrt{\frac{\pi}{2}}\zeta,
\end{align}
as $n\to\infty$, for $j\ge 1$. We now present a known result that we utilise to prove Theorem \textbf{\ref{thr:LinearFoodChain}}.
\begin{proposition}\label{Dobrushin_lemma}
\textit{(Dobrushin (1955), (v))} Let $Y(t)$ and $\tau_n$ be independent sequences of r.v.'s such that
\begin{align}\label{Dobrushin_condition}
\frac{Y(t)}{bt^\beta} \Rightarrow Y, \ \text{as $t\to\infty$, \ and} \ \ \frac{\tau_n}{d n^\delta} \Rightarrow \tau, \ \text{as $n\to\infty$.}
\end{align}
Then for independent $Y$ and $\tau$ we have
\begin{align*}
\frac{Y(\tau_n)}{b d^\beta n^{\beta\delta}} \Rightarrow Y \tau^\beta , \ \text{as $n\to\infty$.}
\end{align*}
\end{proposition}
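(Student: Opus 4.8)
The plan is to reduce the assertion to one joint weak convergence followed by an application of the continuous mapping theorem. Factor the normalising constant as $bd^{\beta}n^{\beta\delta}=(b\tau_n^{\beta})\,\bigl(\tau_n/(dn^{\delta})\bigr)^{-\beta}$, so that
$$\frac{\zeta(\tau_n)}{bd^{\beta}n^{\beta\delta}}=\frac{\zeta(\tau_n)}{b\tau_n^{\beta}}\left(\frac{\tau_n}{dn^{\delta}}\right)^{\!\beta}.$$
Thus it suffices to show that
$$\left(\frac{\zeta(\tau_n)}{b\tau_n^{\beta}},\ \frac{\tau_n}{dn^{\delta}}\right)\ \Rightarrow\ (\zeta,\tau),\qquad\text{as }n\to\infty,$$
with $\zeta$ and $\tau$ \emph{independent} on the right; the conclusion then follows by the continuous mapping theorem applied to $(x,y)\mapsto xy^{\beta}$, which is continuous on $\mathbb{R}\times[0,\infty)$ since $\beta>0$. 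Throughout I use that $\tau_n\to\infty$ in probability: for each fixed $K>0$ we have $\PB\{\tau_n\le K\}=\PB\{\tau_n/(dn^{\delta})\le K/(dn^{\delta})\}\to0$, because $dn^{\delta}\to\infty$ and the limit $\tau$ puts no mass at $0$ (this positivity, needed for $\tau_n\to\infty$, is part of Dobrushin's hypotheses and holds in the application to \thr{LinearFoodChain}, where $\tau$ is a positive stable law).

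To prove the joint convergence I would work with characteristic functions, exploiting the independence of the sequence $\{\tau_n\}$ from the family $\zeta(\cdot)$. Fix $\lambda,\mu\in\mathbb{R}$ and set $h(t)=\Expect\exp\bigl(i\lambda\,\zeta(t)/(bt^{\beta})\bigr)$, a deterministic function with $|h|\le1$; by the hypothesis $\zeta(t)/(bt^{\beta})\Rightarrow\zeta$ and L\'evy's continuity theorem, $h(t)\to\varphi_{\zeta}(\lambda):=\Expect e^{i\lambda\zeta}$ as $t\to\infty$. (If $h$ is only defined for $t\ge t_0$, replace $\tau_n$ by $\max(\tau_n,t_0)$; since $\PB\{\tau_n<t_0\}\to0$ this changes nothing asymptotically, and similarly one reads $\zeta(\lfloor\tau_n\rfloor)$ if $\zeta$ is indexed by integers.) Conditioning on $\tau_n$ and using independence,
$$\Expect\exp\!\left(i\lambda\frac{\zeta(\tau_n)}{b\tau_n^{\beta}}+i\mu\frac{\tau_n}{dn^{\delta}}\right)=\Expect\!\left[h(\tau_n)\,\exp\!\left(i\mu\frac{\tau_n}{dn^{\delta}}\right)\right].$$
Since $h(t)\to\varphi_{\zeta}(\lambda)$ and $\tau_n\to\infty$ in probability, $h(\tau_n)\to\varphi_{\zeta}(\lambda)$ in probability; together with $\tau_n/(dn^{\delta})\Rightarrow\tau$ and Slutsky's lemma, $h(\tau_n)\exp\bigl(i\mu\tau_n/(dn^{\delta})\bigr)\Rightarrow\varphi_{\zeta}(\lambda)\,e^{i\mu\tau}$, and as these variables are bounded by $1$, bounded convergence gives
$$\Expect\exp\!\left(i\lambda\frac{\zeta(\tau_n)}{b\tau_n^{\beta}}+i\mu\frac{\tau_n}{dn^{\delta}}\right)\ \longrightarrow\ \varphi_{\zeta}(\lambda)\,\Expect e^{i\mu\tau},\qquad\text{as }n\to\infty.$$
Since $\lambda,\mu$ were arbitrary, the bivariate characteristic functions converge to the product of the characteristic functions of $\zeta$ and of $\tau$, so by L\'evy's continuity theorem (with the Cram\'er--Wold device) the pair converges weakly to $(\zeta,\tau)$ with independent coordinates. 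With the first paragraph, this proves the proposition.

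I expect the single load-bearing step to be this conditioning identity --- the passage from weak convergence along \emph{deterministic} times $t\to\infty$ to weak convergence evaluated at the \emph{random} times $\tau_n$. One cannot merely invoke ``independent pieces converge jointly'', since $\zeta(\tau_n)/(b\tau_n^{\beta})$ and $\tau_n/(dn^{\delta})$ both involve $\tau_n$ and are genuinely dependent for each finite $n$. The identity circumvents this by exhibiting the conditional characteristic function of $\zeta(\tau_n)/(b\tau_n^{\beta})$ given $\tau_n$ as $h(\tau_n)$, which converges to the constant $\varphi_{\zeta}(\lambda)$ irrespective of the value of $\tau_n$; this asymptotic conditional independence is exactly what makes the limit factorise, and the remaining ingredients (Slutsky's lemma, bounded convergence, the continuous mapping theorem, positivity of $\tau$) are routine. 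A minor preliminary is to check that $\zeta(\tau_n)$ is well defined, i.e.\ that $\tau_n$ lies in the index set of $\zeta(\cdot)$; in the application to \thr{LinearFoodChain} this is automatic, as there $\zeta(\cdot)=X^{(N-1)}(\cdot)$ is indexed by $\mathbb{Z}_{+}$ and $\tau_n=N-1+\theta_N(n-N+1)\in\mathbb{Z}_{+}$.
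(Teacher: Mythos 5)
Your proof is correct, but there is nothing in the paper to compare it against: the proposition is quoted as statement (v) of Dobrushin (1955) and is used as a black box in the proof of Theorem~\ref{thr:LinearFoodChain}, with no internal proof given. What you have done is reprove Dobrushin's composition lemma by the standard route: factor $\zeta(\tau_n)/(bd^\beta n^{\beta\delta})$ as $\bigl(\zeta(\tau_n)/(b\tau_n^\beta)\bigr)\bigl(\tau_n/(dn^\delta)\bigr)^\beta$, prove joint convergence of the pair to an independent couple $(\zeta,\tau)$ by conditioning on $\tau_n$ --- the conditional characteristic function $h(\tau_n)$ tends in probability to the constant $\varphi_\zeta(\lambda)$ because $h(t)\to\varphi_\zeta(\lambda)$ deterministically and $\tau_n\to\infty$ in probability --- and finish with the continuous mapping theorem for $(x,y)\mapsto xy^\beta$ on $\mathbb{R}\times[0,\infty)$. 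This is sound, and the points that genuinely need care are exactly the ones you flag: $\tau_n\to\infty$ in probability requires that $\tau$ put no mass at $0$ (true in the application, where $\tau$ is a positive stable limit); $\tau_n$ must lie in the index set of $\zeta(\cdot)$ (automatic in the application, where both are integer-valued); and your appeal to ``bounded convergence'' is more precisely ``weak convergence of uniformly bounded complex-valued variables implies convergence of expectations,'' a cosmetic rephrasing. The trade-off is the usual one: the paper buys brevity by citing Dobrushin, while your argument makes the statement self-contained and exposes exactly which hypotheses (positivity of the limit $\tau$, independence of the whole family $\zeta(\cdot)$ from $\{\tau_n\}$) are actually used.
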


Indeed, by the Central Limit Theorem $X^{(1)}(n) / \sqrt{n}$ weakly converges to a normally distributed r.v. $\psi$ (we assume that $\psi$ and $\zeta$ are independent). Together with $(\ref{weak convergence of eta(n-N+1)})$ and independence of $X^{(1)}(n)$ and $\eta(n)$, this ensures that condition $(\ref{Dobrushin_condition})$ holds with $Y(t) = X^{(1)}([t])$, $\tau_n = 1 + \eta(n-1)$ and  $\beta = \delta = 1/2$. By Proposition \textbf{\ref{Dobrushin_lemma}},
\begin{align*}
\frac{X^{(2)}(n)}{n^{1/4}} \overset{d}{=} \frac{X^{(1)}(1 + \eta(n-1))}{n^{1/4}} \Rightarrow \psi \sqrt{\sqrt{\frac{\pi}{2}}\zeta}, \ \text{as $n\to\infty$.}
\end{align*}
Let $\{\zeta_j\}_{j=2}^{N}$ be independent copies of $\zeta$ which are independent of $\psi$. Next, we use the induction argument. For some $j\ge 1$ condition $(\ref{Dobrushin_condition})$ holds with $Y(t) = X^{(j)}([t])$, $\tau_n = j-1 + \eta(n-j+1)$,  $\beta = 2^{-j}$ and $\delta = 2^{-1}$. By Proposition \textbf{\ref{Dobrushin_lemma}}, we get
\begin{align*}
\frac{X^{(j+1)}(n)}{n^{2^{-(j+1)}}}\overset{d}{=} \frac{X^{(j)}(j-1 + \eta(n-j+1))}{n^{2^{-(j+1)}}} \Rightarrow \psi \prod_{i=2}^{j+1}\sqrt{\sqrt{\frac{\pi}{2}}\zeta_i}, \ \text{as $n\to\infty$.}
\end{align*}
This concludes the proof of Theorem \textbf{\ref{thr:LinearFoodChain}}.

\section*{Acknowledgements}

The authors would like to thank two anonymous referees for their helpful and constructive comments. 

\appendix
\section*{Appendix}
\section{Weak convergence for processes from $D[[0,\infty), \mathbb{R}]$}\label{app:m_1-topology}
To make the paper self-contained, we recall the definition of the $\mathcal{J}_1$-topology (see, e.g., Skorokhod (1956)).
Let $D[[0,T], \mathbb{R}]$ denote the space of all right-continuous functions on $[0, T]$ having left limits (RCLL or c\`{a}dl\`{a}g). For any $g \in D[[0,T], \mathbb{R}]$ let $\bigl\|g\bigr\| = \sup_{t\in[0, T]} |g(t)|$.

Let $\Lambda_T$ be the set of increasing continuous functions $\lambda: [0, T] \to [0, T]$, such that $\lambda(0) = 0$ and $\lambda(T) = T$. Let $\lambda_{id, T}$ denote the identity function. Then
$$d_{\mathcal{J}_1, T}(g_1, g_2) = \inf_{\lambda\in\Lambda_T} \max( \bigl\|g_1 \circ \lambda - g_2\bigr\|,  \bigl\| \lambda - \lambda_{id, T}\bigr\|)$$
defines a metric inducing $\mathcal{J}_1$.

Let $D[[0,\infty), \mathbb{R}]$ be the space of all RCLL functions on the positive half-line. On the space $D[[0, \infty), \mathbb{R}]$ the $\mathcal{J}_1$-topology is defined by the metric 
$$d_{\mathcal{J}_1, \infty}(g_{1}, g_{2}) = \int_0^\infty e^{-t} \min(1, d_{\mathcal{J}_1, T}(g_{1,T}, g_{2,T})) dT,$$
where, for $i=1,2$,  $g_{i,T}$ is the restriction of function $g_i$ on the interval $[0,T]$. 
 
Convergence $g_n \to g$ in $(D[[0, \infty), \mathbb{R}], \tau)$ means that $d_{\tau, T}(g_n, g) \to 0$ for every continuity point $T$ of $g$ (see, Whitt (2002)).

Let $\{\{X_n(t)\}_{t\geq 0}\}_{n=1}^\infty$ and $\{X(t)\}_{t\geq 0}$ be stochastic processes with trajectories from $D[[0,\infty), \mathbb{R}]$. We say that weak convergence 
$$\{X_n(t)\}_{t\geq 0} \overset{\mathcal{D}}{\Rightarrow} \{X(t)\}_{t\geq 0},$$
holds if
$$\Expect f(\{X_n(t)\}_{t\geq 0}) \to \Expect f(\{X(t)\}_{t\geq 0}), \ \text{as $n\to\infty$,}$$
for any continuous and bounded function $f$ on $D[[0,\infty), \mathbb{R}]$ endowed with the $\mathcal{J}_1$-topology.

\begin{proposition}\label{auxiliary statement for equivalent convergence in J_1}
Let $\{X_n\}_{n=1}^\infty$ and $\{Y_n\}_{n=1}^\infty$ be two sequences of stochastic processes  with trajectories from $D[[0, \infty), \mathbb{R}]$. Given $d_{\mathcal{J}_1, \infty}(X_n, Y_n) \overset{a.s.}{\to} 0$, we have
$$X_n - Y_n \overset{\mathcal{D}}{\Rightarrow} 0.$$ 
\end{proposition}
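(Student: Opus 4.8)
The plan is to reduce the assertion to a statement about the metric $d_{\mathcal{J}_1,\infty}$ and then to the uniform norm. Since the limit process $\mathbf{0}$ (the function identically equal to $0$) is non-random, weak convergence $X_n - Y_n \overset{\mathcal{D}}{\Rightarrow} \mathbf{0}$ in $D[[0,\infty),\mathbb{R}]$ equipped with $\mathcal{J}_1$ is equivalent to $d_{\mathcal{J}_1,\infty}(X_n - Y_n, \mathbf{0}) \to 0$ in probability: one direction is immediate by testing with the bounded continuous functional $g \mapsto \min(1, d_{\mathcal{J}_1,\infty}(g, \mathbf{0}))$, the other is a routine application of the Portmanteau characterisation to closed balls around $\mathbf{0}$. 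Hence it suffices to prove $d_{\mathcal{J}_1,\infty}(X_n - Y_n, \mathbf{0}) \overset{a.s.}{\to} 0$, which is exactly the form in which we can hope to feed in the hypothesis $d_{\mathcal{J}_1,\infty}(X_n, Y_n) \overset{a.s.}{\to} 0$.

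Next I would simplify $d_{\mathcal{J}_1,\infty}(\cdot,\mathbf{0})$. For $g \in D[[0,T],\mathbb{R}]$ and any $\lambda \in \Lambda$, the map $\lambda$ is an increasing continuous bijection of $[0,T]$, so $\sup_{[0,T]}|g \circ \lambda| = \sup_{[0,T]}|g|$; taking $\lambda = \lambda_{id}$ is then optimal and gives $d_{\mathcal{J}_1,T}(g,\mathbf{0}) = \sup_{s \le T}|g(s)|$. Therefore $d_{\mathcal{J}_1,\infty}(g,\mathbf{0}) = \int_0^\infty e^{-t}\min\bigl(1,\sup_{s\le t}|g(s)|\bigr)\,dt$, and by bounded convergence it is enough to show, for every fixed $t$,
$$\sup_{s\le t}\bigl|X_n(s) - Y_n(s)\bigr| \to 0 \quad \text{a.s.}$$

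The crux is to extract this uniform bound from the hypothesis. On the asymptotically full-probability event $\{d_{\mathcal{J}_1,\infty}(X_n,Y_n) < \delta_n\}$ with $\delta_n \to 0$, one may select $\lambda_n \in \Lambda$ with $\sup_{s\le t}|X_n(\lambda_n(s)) - Y_n(s)| \le \delta_n$ and $\sup_{s\le t}|\lambda_n(s) - s| \le \delta_n$, so that $|X_n(s)-Y_n(s)| \le \delta_n + |X_n(s) - X_n(\lambda_n(s))|$ with $|s-\lambda_n(s)| \le \delta_n$. The obstacle I anticipate is precisely here: $|X_n(s)-X_n(\lambda_n(s))|$ is governed by the oscillation of the càdlàg path $X_n$ over windows of width $\delta_n$, which need not be small near a jump, since the $\mathcal{J}_1$-distance only compares the two processes after a common time change while the difference $X_n - Y_n$ is formed without one. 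I would close this gap by invoking the structure of the processes to which we apply the statement — they are piecewise constant, and the mismatched jumps either coincide in size with the corresponding jumps of the other process or are separated by intervals of vanishing length that carry no mass after the common reparametrisation — so that the right-hand side indeed vanishes along the a.s.-convergent realisations. An equivalent route, adequate for every use we make of the result, is to record the companion ``converging together'' consequence: if moreover $Y_n \overset{\mathcal{D}}{\Rightarrow} Z$ then $X_n \overset{\mathcal{D}}{\Rightarrow} Z$, which follows from $\PB(X_n \in F) \le \PB\bigl(Y_n \in \overline{F^\varepsilon}\bigr) + \PB\bigl(d_{\mathcal{J}_1,\infty}(X_n,Y_n) \ge \varepsilon\bigr)$ for closed $F$, Portmanteau applied to $Y_n$, and letting $\varepsilon \downarrow 0$.
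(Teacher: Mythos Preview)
The paper offers no proof of this proposition; it is simply stated at the end of the appendix. More to the point, your suspicion about the obstacle is correct: the statement as written is \emph{false}. Take the deterministic processes $X_n(s)=\mathbb{I}[s\ge 1]$ and $Y_n(s)=\mathbb{I}[s\ge 1+1/n]$. A piecewise-linear time change sending $1+1/n$ to $1$ shows $d_{\mathcal{J}_1,T}(X_n,Y_n)\le 1/n$ for all $T$, hence $d_{\mathcal{J}_1,\infty}(X_n,Y_n)\to 0$. But $X_n-Y_n=\mathbb{I}[1\le s<1+1/n]$, and since (as you computed) $d_{\mathcal{J}_1,T}(g,\mathbf{0})=\sup_{s\le T}|g(s)|$, we get $d_{\mathcal{J}_1,T}(X_n-Y_n,\mathbf{0})=1$ for every $T\ge 1$ and every $n$, so $X_n-Y_n\not\Rightarrow\mathbf{0}$. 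This is precisely the well-known discontinuity of subtraction on $D\times D$ in the $\mathcal{J}_1$ topology, and it is exactly the gap you flagged; your attempt to close it by ``invoking the structure of the processes'' is therefore not a proof of the proposition but an admission that extra hypotheses are needed.

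Your ``companion consequence'' is the correct replacement and your Portmanteau sketch for it is complete: from $d_{\mathcal{J}_1,\infty}(X_n,Y_n)\to 0$ in probability together with $Y_n\overset{\mathcal{D}}{\Rightarrow}Z$ one deduces $X_n\overset{\mathcal{D}}{\Rightarrow}Z$, valid in any separable metric space. Every use the paper makes of the proposition is of this converging-together form---transferring the functional limit from $\widehat{M}(ct)=M(ct+1)$ to $M(ct)$ in the proof of Theorem~1, and (via the stronger uniform bound actually established there) from $\widetilde{M}$ to $M$ in the proof of Theorem~2---so the main results are unaffected; only the wording of the proposition needs amending.
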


\section{Asymptotic closeness of two scaled processes}\label{app:NewProp3}
In this section we prove the Remark to Theorem $1$. We consider a general CM MC $(C_n, M_n)$, $n \ge 0$, and the jumps $\xi^{(2)}_k$ of the second component are bounded r.v.'s. Then for the process $M(nt) = M_{[nt]}$ we have the same functional limit theorem as for the process $\widehat{M}(nt) = M(nt +1)$. Due to Proposition \ref{auxiliary statement for equivalent convergence in J_1}, it is sufficient to prove the following.
\begin{proposition}\label{auxiliry lemma about convergence in J_1}
We have
\begin{align*}
d_{\mathcal{J}_1, \infty}\left(\left\lbrace \frac{M(nt)}{b(\sqrt{n})}, t\geq 0 \right\rbrace,\left\lbrace \frac{M(nt + 1)}{b(\sqrt{n})}, t\geq 0 \right\rbrace\right) \overset{a.s.}{\to} 0, \ \text{as $n\to\infty$. }
\end{align*}
\end{proposition}
\begin{proof}
First, we restrict our processes to the time interval $[0, T]$, with an arbitrary finite $T$, and investigate the convergence of the distance $d_{\mathcal{J}_1, T} (\cdot, \cdot)$ between our processes. Second, we bound the distance using the following function $\lambda_n$:
$$\lambda_n(t) = \begin{cases}
0, \ t\in [0, \frac{1}{n}],\\
t-\frac{1}{n}, \ t\in [\frac{1}{n}, t_n],\\
t_n + (t-t_n)\frac{T-t_n + 1/n}{T-t_n},
\end{cases} \ \text{where} \ t_n = \frac{[nT] - 1}{n}.$$

Thus, $M(nt) = M(n\lambda_n(t)+1)$ for $t\in [1/n, t_n]$.  Then the distance between processes on time interval $[0,T]$ can be bounded as follows:
\begin{multline*}
d_{\mathcal{J}_1, T}\left(\left\lbrace \frac{M(nt)}{b(\sqrt{n})}, t \in[0, T] \right\rbrace,\left\lbrace \frac{M(nt + 1)}{b(\sqrt{n})}, t \in[0, T] \right\rbrace\right) \\
\le  \max\left( \frac{\max(|\xi^{(2)}_1|, |\xi^{(2)}_{\eta([nT]-2) +1}|, |\xi^{(2)}_{\eta([nT]-2) +1} + \xi^{(2)}_{\eta([nT]-1) +1}|)}{b(\sqrt{n})}, \frac{1}{n} \right)\\
\le \max\left( \frac{\max(|\xi^{(2)}_1|, |\xi^{(2)}_{\eta([nT]-2) +1}|, |\xi^{(2)}_{\eta([nT]-2) +1}| + |\xi^{(2)}_{\eta([nT]-2) +2}|)}{b(\sqrt{n})}, \frac{1}{n} \right).
\end{multline*}
Since $\xi^{(2)}_k$ are bounded and $b(n) \to \infty$, as $n\to\infty$, the right-hand side of last inequality converges to zero a.s.. 
\end{proof}

\section{Tail asymptotics for randomly stopped sum}\label{app:RandStopSumAsymp}

Let $\xi_1, \xi_2, \ldots$ be positive i.i.d. r.v.'s with a common distribution function $F$. Let $S_0 = 0$ and $S_k = \xi_1 + \ldots \xi_k$, $k\geq 1$. Let $\tau$ be a counting r.v. with a distribution function $G$, independent of $\{\xi_k\}_{k=1}^\infty$. For a general overview concerning tail asymptotics of $S_\tau$ see, e.g., Denisov \textit{et al.} (2010) and references therein.  The next result follows from Theorem $1$ from Korshunov (2009).

\begin{proposition}\label{Wald's identity case for infinite mean}
Assume that $\overline{F}(x) \sim l_1(x) /x^\alpha$, $\alpha \in [0, 1)$ and $\tau$ has any distribution with $\Expect\tau < \infty$. Then
\begin{align*}
\PB\{S_\tau > n\} \sim \Expect \tau \PB\{\xi > n\} \ \text{as $n\to\infty$}.
\end{align*}
\end{proposition}

The next result we use in \lemt{DCM trajectory} and we prove it using Tauberian theorems.

\begin{proposition}\label{Proposition_S_tau}
Assume that $\overline{F}(x) \sim l_1(x) /x^\alpha$ and $\overline{G}(x) \sim l_2(x) /x^\beta$,  $\alpha, \beta \in (0,1)$. Then
\begin{align*}
\PB\{S_\tau > n\} \sim n^{-\alpha\beta} \frac{\Gamma^\beta(1-\alpha) \Gamma(1-\beta)}{\Gamma(1-\alpha\beta)}l_1^\beta\left(n \right) l_2 \left(\frac{n^\alpha}{\Gamma(1-\alpha)l_1\left(n\right)} \right), \ \text{as $n\to\infty$.}
\end{align*}
\end{proposition}

\begin{proof}
Denote the c.d.f. of $S_\tau$ by $H$. Let 
\begin{align*}
\overline{F}(x) = 1- F(x), \ x\in \mathbb{R},
\end{align*}
\begin{align*}
\widehat{F}(\lambda) = \Expect e^{-\lambda \xi_1} = \int_0^\infty e^{-\lambda x} dF(x), \ \lambda \ge 0.
\end{align*}

Define $\overline{G}, \widehat{G}, \overline{H}$, and $\widehat{H}$ similarly.  We use the following result.

\begin{proposition}
\textbf{(part of Corollary 8.1.7, Bingham, Goldie and Teugels (1987))} For a constant $ \alpha \in [0, 1]$, and for a slowly varying at infinity function $l$, the following are equivalent:
\begin{align*}
1 - \widehat{F}(\lambda) \sim \lambda^\alpha l\left(\frac{1}{\lambda} \right), \ \text{as $\lambda \downarrow 0$, } 
\overline{F}(x) \sim \frac{l(x)}{x^\alpha \Gamma(1-\alpha)}, \ \text{as $x\to \infty$,} & \text{if $0\le\alpha < 1.$}
\end{align*}
\end{proposition} 

Using this result, we get
\begin{align*}
1-\widehat{F}(\lambda) \sim \lambda^\alpha \Gamma(1-\alpha)l_1\left(\frac{1}{\lambda} \right) \ \text{and} \ 1-\widehat{G}(\lambda) \sim \lambda^\beta \Gamma(1-\beta)l_2\left(\frac{1}{\lambda} \right), \ \text{as $\lambda\downarrow 0$.}
\end{align*}

Let us analyse $\widehat{H}$:
\begin{align*}
\widehat{H}(\lambda) = \Expect e^{-\lambda S_\tau} = \sum_{k=1}^\infty e^{-\lambda (\xi_1 + \ldots + \xi_k)} \PB\{\tau = k\} = \Expect \left(\Expect e^{-\lambda \xi_1} \right)^\tau  = \widehat{G}(-\ln \widehat{F}(\lambda)).
\end{align*}

Since
\begin{align*}
-\ln \widehat{F}(\lambda) = -\ln(1 - (1- \widehat{F}(\lambda))) \sim 1- \widehat{F}(\lambda), \ \text{as $\lambda\downarrow 0$,}
\end{align*}

we have
\begin{multline}\label{eq:Tauberian_equivalence}
1 - \widehat{H}(\lambda) \sim 1 - \widehat{G}\left(\lambda^\alpha \Gamma(1-\alpha)l_1\left(\frac{1}{\lambda} \right) \right)\\
 \sim \lambda^{\alpha\beta}\Gamma^\beta(1-\alpha) \Gamma(1-\beta) l_1^\beta\left(\frac{1}{\lambda} \right) l_2 \left(\frac{1}{\lambda^\alpha \Gamma(1-\alpha)l_1\left(\frac{1}{\lambda} \right)} \right),
\end{multline}
as $\lambda \downarrow 0$, and finally
\begin{align*}
\overline{H}(x) \sim x^{-\alpha\beta} \frac{\Gamma^\beta(1-\alpha) \Gamma(1-\beta)}{\Gamma(1-\alpha\beta)}l_1^\beta\left(x \right) l_2 \left(\frac{x^\alpha}{\Gamma(1-\alpha)l_1\left(x\right)} \right), \ \text{as $x\to\infty$.}
\end{align*}
Note that the function $l_2(h(\lambda))$ with $h(\lambda) = 1/ (\lambda^\alpha \Gamma(1-\alpha)l_1\left(\frac{1}{\lambda} \right))$ on the right-hand side of \eq{Tauberian_equivalence} is a slowly varying at infinity function. Indeed, for any constant $c \neq 0$, as $\lambda \downarrow 0$,
$$h(c\lambda) = \frac{1}{(c\lambda)^\alpha \Gamma(1-\alpha)l_1\left(\frac{1}{c\lambda} \right)} \sim c^{-\alpha}h(\lambda) $$
and therefore $l_2(h(c\lambda)) \sim l_2(c^{-\alpha} h(\lambda)) \sim l_2(h(\lambda))$.
\end{proof}


\begin{thebibliography}{99}
\footnotesize

\bibitem{CoppersmithDoyleRaghavanSnir}\label{CoppersmithDoyleRaghavanSnir}
{\sc Coppersmith, D., Doyle, P., Raghavan, P., Snir, M.} (1993).
Random walks on weighted graphs and applications to on-line algorithms. {\em J. Assoc.
Comput. Mach.}, {\bf 40,} 421-–453.

\bibitem{ManasseMcGeochSleator}\label{ManasseMcGeochSleator}
{\sc Manasse, M.~S., McGeoch, L.~A., Sleator, D.~D} (1990).
Competitive algorithms for server problems. {\em Journal of Algorithms}, {\bf 11,} 208--230.

\bibitem{BorodinLinialSaks}\label{BorodinLinialSaks}
{\sc Borodin, A., Linial, N., Saks, M.} (1992).
An optimal online algorithm for metrical tasks systems. {\em Journal of ACM}, {\bf 39,} 745--763.

\bibitem{BaezaYatesCulbersonRawlins}\label{BaezaYatesCulbersonRawlins}
{\sc Baeza-Yates, R.~A., Culberson, J.~C., Rawlins., G.~J.~E.} (1993).
Searching in the plane. {\em Information and Computation}, {\bf 106,} 234--252.

\bibitem{AldousFill}\label{AldousFill}
{\sc Aldous, D., Fill, J.} (2002).
{ \em Reversible Markov chains and random walks on graphs}, Available at http://www.stat.berkeley.edu/users/aldous/RWG/book.html. 


\bibitem{LitvakRobert}\label{LitvakRobert}
{\sc Litvak, N., Robert, P.} (2012).
A scaling analysis of a cat and mouse Markov chain. {\em Annals of Probability}, {\bf 22,} 792--826.

\bibitem{Spitzer}\label{Spitzer}
{\sc Spitzer, F.} (1964).
{\em Principles of random walk}, Springer, New York.

\bibitem{Kasahara}\label{Kasahara}
{\sc Kasahara, Y.} (1984).
Limit theorems for L\'evy processes and Poisson point processes and their applications to Brownian excursions. {\em J. Math. Kyoto University}, {\bf 24,} 521--538.

\bibitem{Dobrushin}\label{Dobrushin}
{\sc Dobrushin, R.~L.} (1955).
Lemma on the limit of compound random functions. {\em Uspekhi Mat. Nauk}, {\bf 10,} 157--159.

\bibitem{Gamarnik}\label{Gamarnik}
{\sc Gamarnik, D.} (2004). 
Stochastic bandwidth packing process: stability conditions via Lyapunov function technique. {\em Queueing systems}, {\bf 48,} 339–-363. 

\bibitem{GamarnikSquillante}\label{GamarnikSquillante}
{\sc Gamarnik, D., Squillante, M.} (2005).
Analysis of stochastic online bin packing processes. {\em Stochastic Models}, {\bf 21,} 401-–425. 

\bibitem{BorstJonckheereLeskela}\label{BorstJonckheereLeskela}
{\sc Borst, S., Jonckheere, M., Leskela L.} (2008).
Stability of parallel queueing systems with coupled service rates. {\em Discrete Event Dynamic Systems}, {\bf 18,} 447-–472.

\bibitem{FossShneerTurlikov}\label{FossShneerTurlikov}
{\sc Foss, S., Shneer, S., Tyurlikov, A.} (2012).
Stability of a Markov-modulated Markov chain, with application to a
wireless network governed by two protocols. {\em Stochastic Systems}, {\bf 2,} 208–-231.

\bibitem{JurlewiczKernMeerschaertScheffler}\label{JurlewiczKernMeerschaertScheffler}
{\sc Jurlewicz, A., Kern, P., Meerschaert, M.~M., Scheffler, H.-P.} (2012).
Fractional governing equations for coupled random walks. {\em Computers and Mathematics with Applications}, {\bf 64}, 3021--3036.



\bibitem{ShahShin}\label{ShahShin}
{\sc Shah, D., Shin, J.} (2012).
Randomized scheduling algorithm for queueing networks. {\em Annals of Applied Probability}, {\bf 22,} 128--171.  

\bibitem{GeorgiouWade}\label{GeorgiouWade}
{\sc Georgiou, N., Wade, A.~R.} (2014).
Non-homogeneous random walks on a semi-infinite strip. {\em Stochactic Processes and their Applications}, {\bf 124,}, 3179--3205.

\bibitem{FossShneerThomasWorral}\label{FossShneerThomasWorral}
{\sc Foss, S., Shneer, S., Thomas, J.~P., Worrall T.} (2018).
Stochastic stability of monotone economies in regenerative environments. {\em Journal of Economic Theory}, {\bf 173,}  334--360.


\bibitem{AsmussenHenriksenKloppelberg}\label{AsmussenHenriksenKloppelberg}
{\sc Asmussen, S., Henriksen, L.~F., Kloppelberg, C.} (1994).
Large claims approximations for risk processes in a Markovian environment. {\em Stochactic Processes and their Applications}, {\bf 54,} 29--43.

\bibitem{JelenkovicLazar}\label{JelenkovicLazar}
{\sc Jelenkovic, P.~R., Lazar, A.~A.} (1998).
Subexponential Asymptotics of a Markov-Modulated Random Walk with Queueing Applications. {\em Journal of Applied Probability}, {\bf 35,} 325--347.

\bibitem{AlsmeyerSgibney}\label{AlsmeyerSgibney}
{\sc Alsmeyer, G., Sgibnev, M.} (1999).
On the tail behaviour of the supremum of random walk defined on a Markov chain. {\em Yokohama Math. J.}, {\bf 46,} 139-–159.

\bibitem{HansenJensen}\label{HansenJensen}
{\sc Hansen, N.~R., Jensen, A.~T.} (2005).
The extremal behaviour over regenerative cycles for Markov additive processes with heavy tails. {\em Stochactic Processes and their Applications}, {\bf 115,} 579--591.

\bibitem{LuLi}\label{LuLi}
{\sc Lu, Y., Li, S.} (2005).
On the probability of ruin in a Markov-modulated risk model. {\em Insurance: Mathematics and Economics}, {\bf 37,} 522--532.

\bibitem{FossKonstantopoulosZachary}\label{FossKonstantopoulosZachary}
{\sc Foss, S., Konstantopoulos, T., Zachary, S.} (2007).
Discrete and continuous time modulated random walks with heavy-tailed increments. {\em J. Theoret. Probab.}, {\bf 20,} 581–-612.



\bibitem{HenryStraka}\label{HenryStraka}
{\sc Henry, B.~I., Straka, P. } (2011).
Lagging and leading coupled continuous time random
walks, renewal times and their joint limits
. {\em Stochactic Processes and their Applications}, {\bf 121}, 324--336.

\bibitem{Tkachuk}\label{Tkachuk}
{\sc Tkachuk. S.~G.} (1977).
Limit theorems for sums of independent random variables belonging to the domain of attraction of a stable law. PhD thesis, University of Tashkent.

\bibitem{DenisovFossKorshunovInfiniteMean}\label{DenisovFossKorshunovInfiniteMean}
{\sc Denisov, D., Foss, S. and Korshunov, D.} (2004).
Tail asymptotics for the supremum of a random walk when the mean is not finite. {\em Queueing Systems}, {\bf 46,} 15--33.

\bibitem{DenisovFossKorshunov}\label{DenisovFossKorshunov}
{\sc Denisov, D., Foss, S. and Korshunov, D.} (2010).
Asymptotics of randomly stopped sums in the presence of heavy tails. {\em Bernoulli}, {\bf 16,} 971--994.

\bibitem{Becker-KernMeerschaertScheffler}\label{Becker-KernMeerschaertScheffler}
{\sc Becker-Kern, P., Meerschaert, M.M. and Scheffler, H.-P.} (2004).
Limit Theorems for Coupled Continuous Time Random Walks. {\em Annals of Probability}, {\bf 32,} 730--756.

\bibitem{BorovkovBorovkov}\label{BorovkovBorovkov}
{\sc Borovkov, A.~A. and Borovkov, K.~A.} (2008).
{\em Asymptotic analysis of random walks: Heavy-tailed distributions}, Cambridge Univ. Press, Encyclopedia of Mathematics and Its Applications 118.

\bibitem{Skorkhod}\label{Skorokhod}
{\sc Skorokhod, A.~V.} (1956).
Limit theorems for stochastic processes. {\em Theory Probab. Appl.}, {\bf 1,} 261--290.

\bibitem{Feller}\label{Feller}
{\sc Feller, W.} (1971a).
{\em An Introduction to Probability Theory and Its Applications}, 1. John Wiley \& Sons.

\bibitem{Feller}\label{Feller2}
{\sc Feller, W.} (1971b).
{\em An Introduction to Probability Theory and Its Applications}, 2. John Wiley \& Sons.

\bibitem{Doney}\label{Doney}
{\sc Doney, R.~A.} (1995).
One-sided local large deviation in the case of infinite mean. Research report. Manchester Centre for Statistical Science.

\bibitem{Billingsley}\label{Billingsley}
{\sc Billingsley, P.} (1968).
{\em Convergence of probability measures}. John Wiley \& Sons.

\bibitem{BinghamGoldieTeugels}\label{BinghamGoldieTeugels}
{\sc Bingham, N.~H., Goldie, C.~M., Teugels, J.~L.} (1987).
{\em Regular variation}, Encyclopedia of Mathematics and its Applications, \textbf{27}, Cambridge University Press.

\bibitem{Korshunov}\label{Korshunov}
{\sc Korshunov, D} (2009).
An analog of Wald’s identity for random walks with infinite mean. {\em Siberian Mathematical Journal}, {\bf 50,} 663–-666.

\bibitem{Uchiyama1}\label{Uchiyama1}
{\sc Uchiyama, K.} (2011a).
The first hitting time of a single point for random walks. {\em Electron. J. Probab.}, {\bf 16,} 1960--2000. 

\bibitem{Uchiyama2}\label{Uchiyama2}
{\sc Uchiyama, K.} (2011b).
One dimensional lattice random walks with absorption at a point/on a half line. {\em J. Math. Soc. Japan}, {\bf 63,} 675--713. 

\bibitem{Whitt}\label{Whitt}
{\sc Whitt, W.} (2002).
{\em Stochastic-Process limits}. Springer, New York.
\end{thebibliography}
\end{document}